\def\rr{{\mathbb R}}
\def\rn{{{\rr}^n}}
\def\zz{{\mathbb Z}}
\def\cc{{\mathbb C}}
\def\nn{{\mathbb N}}
\def\cp{{\mathcal P}}
\def\cf{{\mathcal F}}
\def\cm{{\mathcal M}}
\def\fz{\infty}
\def\az{\alpha}
\def\bz{\beta}
\def\ez{\epsilon}
\def\lz{\lambda}
\def\boz{{\Omega}}
\def\lf{\left}
\def\r{\right}
\def\hs{\hspace{0.26cm}}
\def\ls{\lesssim}
\def\noz{\nonumber}
\def\wz{\widetilde}
\def\wh{\widehat}
\def\st{\subset}
\def\com{\complement}
\def\bh{\backslash}
\def\cs{{\mathcal S}}
\def\gfz{\genfrac{}{}{0pt}{}}
\def\dist{\mathop\mathrm{\,dist\,}}
\def\supp{\mathop\mathrm{\,supp\,}}
\def\q1{\wz q}
\def\Q1{q_1}
\def\lv{{L^{p(\cdot)}(\rn)}}
\def\wlv{W\!L^{p(\cdot)}(\rn)}
\def\hv{{H^{p(\cdot)}(\rn)}}
\def\whv{{W\!H^{p(\cdot)}(\rn)}}
\def\bije{{b_{i,j}^\ez}}
\def\qij{{Q_{i,j}}}
\def\wqij{{\wz Q_{i,j}}}
\def\dint{\displaystyle\int}
\def\dsup{\displaystyle\sup}
\def\supp{{\mathop\mathrm{\,supp\,}}}
\def\dist{{\mathop\mathrm{\,dist\,}}}
\newtheorem{thm}{Theorem}[section]
\newtheorem{prop}[thm]{Proposition}
\newtheorem{lem}[thm]{Lemma}
\newtheorem{cor}[thm]{Corollary}
\theoremstyle{definition}
\newtheorem{defn}[thm]{Definition}
\newtheorem{rem}[thm]{Remark}
\numberwithin{equation}{section}
\numberwithin{equation}{section}
\begin{document}

\title{\bf\Large Interpolation between $H^{p(\cdot)}(\mathbb R^n)$
and $L^\infty(\mathbb R^n)$: Real Method
\footnotetext{\hspace{-0.35cm} 2010 {\it
Mathematics Subject Classification}. Primary 42B30;
Secondary 42B35, 46B70.
\endgraf {\it Key words and phrases.} (weak) Hardy space, (weak) Lebesgue space, variable exponent, real interpolation.
\endgraf The first author is supported by the Construct Program of the Key Discipline in
Hunan Province. This project is also supported by the National
Natural Science Foundation of China
(Grant Nos. 11571039, 11671185 and 11471042).}}
\author{Ciqiang Zhuo, Dachun Yang\,\footnote{Corresponding author / January 1, 2017.}\ \ and Wen Yuan}
\date{ }
\maketitle

\vspace{-0.8cm}

\begin{center}
\begin{minipage}{13cm}
{\small {\bf Abstract}\quad
Let $p(\cdot):\ \mathbb R^n\to(0,\infty)$ be a variable exponent
function satisfying the globally log-H\"older continuous condition.
In this article, the authors first obtain a decomposition for any distribution
of the variable weak Hardy space into ``good" and ``bad" parts
and then prove the following real interpolation theorem between
the variable Hardy space $H^{p(\cdot)}(\mathbb R^n)$ and the space $L^{\infty}(\mathbb R^n)$:
\begin{equation*}
(H^{p(\cdot)}(\mathbb R^n),L^{\infty}(\mathbb R^n))_{\theta,\infty}=
W\!H^{p(\cdot)/(1-\theta)}(\mathbb R^n),\quad \theta\in(0,1),
\end{equation*}
where $W\!H^{p(\cdot)/(1-\theta)}(\mathbb R^n)$ denotes the variable
weak Hardy space. As an application, the variable weak Hardy space
$W\!H^{p(\cdot)}(\mathbb R^n)$ with $p_-:=\mathop\mathrm{ess\,inf}_{x\in\rn}p(x)\in(1,\infty)$
is proved to coincide with the variable Lebesgue space
$W\!L^{p(\cdot)}(\mathbb R^n)$.
}
\end{minipage}
\end{center}

\section{Introduction\label{s-intro}}
\hskip\parindent
In recent years, theories of several variable function spaces, based on the variable
Lebesgue space, have been rapidly developed (see, for example, \cite{aa16,ah10,cw14,dhr09,ns12,
Xu08,yyyz16,yzy15,yzy151}).
Recall that the variable Lebesgue space $\lv$, with a variable exponent function
$p(\cdot):\ \rn\to(0,\fz)$, is a generalization of the classical Lebesgue space $L^p(\rn)$.
The study of variable Lebesgue spaces can be traced back to Orlicz \cite{or31},
moreover, they have been the subject of more intensive study since the early
work \cite{kr91} of Kov\'a\v{c}ik and R\'akosn\'{\i}k and \cite{fz01} of Fan and Zhao
as well as \cite{cruz03} of Cruz-Uribe and \cite{din04} of Diening, because of their intrinsic
interest for applications into harmonic analysis, partial differential equations and variational
integrals with nonstandard growth conditions (see also \cite{am02,am05,ins14,yyz16} and their references).

As a generalization of the classical Hardy space $H^{p}(\rn)$ and the
variable Lebesgue space $\lv$, the variable Hardy spaces $\hv$ were first investigated by
Nakai and Sawano \cite{ns12} with $p(\cdot)$ satisfying the globally log-H\"older continuous
condition. In \cite{ns12}, they established the atomic characterizations
of $\hv$, which were further applied to consider their dual spaces and the boundedness of
singular integral operators on $\hv$. Later, Sawano \cite{Sa13} extended and improved the atomic
characterization of $\hv$ in \cite{ns12} and Zhuo et al. \cite{zyl16} gave their equivalent
characterizations via (intrinsic) square functions including
the (intrinsic) Lusin-area function, the (intrinsic) Littlewood-Paley $g$-function
or $g_\lz^\ast$-function. Independently, Cruz-Uribe and Wang \cite{cw14} also studied
the variable Hardy spaces $\hv$ with $p(\cdot)$ satisfying
some conditions slightly weaker than those used in \cite{ns12}, and established their
equivalent characterizations by means of radial or non-tangential maximal functions
or atoms. However, the atomic
characterization of $\hv$ obtained in \cite{cw14} is very different
from the classical case, which, in spirit,
is   closer to the atomic characterization for weighted Hardy spaces due to
Str\"omberg and Torchinsky \cite{StTo89}. In addition,
the characterizations of $\hv$ via Riesz transforms with $p(\cdot)$
satisfying the same conditions as in \cite{cw14} were presented in \cite{yzn16}.

Very recently, motivated by the well-known fact that, when studying the boundedness of
some singular integral operators in the critical case, the  {weak Hardy space} $W\!H^p(\rn)$, with any
$p\in(0,1]$, naturally appears as a proper substitute of the Hardy space $H^p(\rn)$ (see \cite{fs86,liu91}),
Yan et al. \cite{yyyz16} introduced the variable weak Hardy space $\whv$ and characterized
these spaces via  the radial or the
non-tangential maximal functions, atoms, molecules, the Lusin-area function,
the Littlewood-Paley $g$-function or $g_\lz^\ast$-function.
As an application, the authors in \cite{yyyz16} established the boundedness of some
convolutional $\delta$-type and non-convolutional $\gamma$-order Calder\'on-Zygmund
operators from $\hv$ to $\whv$ including the critical case when
$p_-=\frac n{n+\delta}$ or $p_-=\frac n{n+\gamma}$, where
\begin{equation}\label{2.1x}
p_-:=\mathop\mathrm{ess\,inf}_{x\in \rn}p(x),
\end{equation}
which implies that the space $\whv$ is a suitable substitute of the space $\hv$
in the study of boundedness of some singular integral operators in the critical case
on $\hv$.

As was well known,  Fefferman et al. \cite{frs74} found that the weak
Hardy space $W\!H^p(\rn)$   naturally appears as the intermediate space
of the classical Hardy space $H^p(\rn)$ and the space $L^\fz(\rn)$
under the real interpolation, which is
another main motivation to develop the real-variable theory of $W\!H^p(\rn)$.
Therefore, it is natural and interesting to ask whether or not the variable 
weak Hardy space serves as the intermediate space
between the variable Hardy space $\hv$ and the space $L^\fz(\rn)$ via the real interpolation.

On the other hand, it is well known that,
when $p\in(1,\fz)$,
\begin{equation}\label{1219f}
W\!H^p(\rn)=W\!L^p(\rn)
\end{equation}
with equivalent quasi-norms
(see \cite{frs74}), where $W\!L^p(\rn)$ denotes the \emph{classical weak Lebesgue space}.
Thus, it is also interesting to know whether or not
this coincidence \eqref{1219f} remains true in the variable setting
under some restriction on the variable exponent function.

In this article, we   give positive answers to the above two questions.
Indeed, in Theorem \ref{t-inter} below,
we prove that the real interpolation space between the variable Hardy space and the space
$L^\fz(\rn)$ is just the variable weak Hardy space introduced in \cite{yyyz16},
via first establishing a useful  decomposition for any distribution of the variable weak Hardy space
into ``good" and ``bad" parts (see Proposition \ref{p-czd} below).
As an application, we conclude that, when $p_-\in(1,\fz)$,
the variable weak Hardy space $\whv$ coincides with the variable weak Lebesgue space
$\wlv$.

To state the main result of this article, we first
recall some basic notions about the theory of real interpolation (see \cite{bl76}).
Let $(X_0,X_1)$ be a compatible couple of quasi-normed spaces, namely, $X_0$ and $X_1$
are two quasi-normed linear spaces which are continuously
embedded into some large topological vector space.
Let
$$X_0+X_1:=\lf\{f_0+f_1:\ f_0\in X_0\ {\rm and}\ f_1\in X_1\r\}.$$
For any $t\in(0,\fz)$, the \emph{Peetre $K$-functional} $K(t,f;X_0,X_1)$ on $X_0+X_1$ is defined
by setting, for any $f\in X_0+X_1$,
\begin{equation*}
K(t,f;X_0,X_1):=\inf\{\|f_0\|_{X_0}+t\|f_1\|_{X_1}:\
f=f_0+f_1,\ f_0\in X_0\ {\rm and}\ f_1\in X_1\}.
\end{equation*}
Then, for any $\theta\in(0,1)$ and $q\in(0,\fz]$, the \emph{real interpolation
space} $(X_0,X_1)_{\theta,q}$ between $X_0$ and $X_1$ is defined as
$$(X_0,X_1)_{\theta,q}:=\lf\{f\in X_0+X_1:\
\|f\|_{\theta,q}<\fz\r\},$$
where, for any $f\in X_0+X_1$,
\begin{equation*}
\|f\|_{\theta,q}:=
\begin{cases}
\lf[\dint_0^\fz \lf\{t^{-\theta}K(t,f;X_0,X_1)\r\}^q\,\frac{dt}t\r]^{1/q}
\quad & {\rm if}\ q\in(0,\fz),\\
\dsup_{t\in(0,\fz)}t^{-\theta}K(t,f;X_0,X_1)\quad & {\rm if}\ q=\fz.
\end{cases}
\end{equation*}

We also recall some notation about variable Lebesgue spaces.
For a detailed exposition of these concepts,
we refer the reader to the monographs \cite{cfbook,dhr11}.
Denote by $\cp(\rn)$ the \emph{collection of all variable exponent functions}
$p(\cdot):\ \rn\to(0,\fz)$ satisfying
\begin{align*}
0<p_-\le p_+:=
\mathop\mathrm{ess\,sup}_{x\in \rn}p(x)<\fz,
\end{align*}
where $p_-$ is as in \eqref{2.1x}.
For a measurable function $f$ on $\rn$ and $p(\cdot)\in\cp(\rn)$,
the \emph{modular functional} (or, simply, the \emph{modular})
$\varrho_{p(\cdot)}$, associated with $p(\cdot)$, is defined by setting
$$\varrho_{p(\cdot)}(f):=\int_\rn|f(x)|^{p(x)}\,dx$$ and the
\emph{Luxemburg} (also known as the \emph{Luxemburg-Nakano})
\emph{quasi-norm} is given by setting
\begin{equation*}
\|f\|_{\lv}:=\inf\lf\{\lz\in(0,\fz):\ \varrho_{p(\cdot)}(f/\lz)\le1\r\}.
\end{equation*}

\begin{defn}
Let $p(\cdot)\in\cp(\rn)$.
\begin{enumerate}
\item[(i)] The \emph{variable Lebesgue space} $\lv$ is defined to be the
set of all measurable functions $f$ on $\rn$ such that the (quasi-)norm $\|f\|_{\lv}$
is finite.

\item[(ii)] The \emph{variable weak Lebesgue space} $W\!L^{p(\cdot)}(\rn)$
is defined to be the set of all measurable functions $f$ on $\rn$ such that
\begin{align*}
\|f\|_{W\!L^{p(\cdot)}(\rn)}:=\sup_{\az\in(0,\fz)}\az\lf
\|\chi_{\{x\in\rn:\ |f(x)|>\az\}}\r\|_{L^{p(\cdot)}(\rn)}<\fz.
\end{align*}
\end{enumerate}
\end{defn}

\begin{rem}\label{r-lv}
Let $p(\cdot)\in\cp(\rn)$ and $s\in(0,\fz)$.
\begin{enumerate}
\item[(i)] It is easy to see that, for any $f\in\lv$,
$\lf\||f|^s\r\|_{\lv}=\|f\|_{L^{sp(\cdot)}(\rn)}^s$.
Moreover, for any $\lz\in{\mathbb C}$ and $f,\ g\in\lv$,
$\|\lz f\|_{\lv}=|\lz|\|f\|_{\lv}$ and
$$\|f+g\|_{\lv}^{\underline{p}}\le \|f\|_{\lv}^{\underline{p}}
+\|g\|_{\lv}^{\underline{p}},$$
here and hereafter,
\begin{align}\label{1219a}
\underline{p}:=\min\{p_-,1\}
\end{align}
with $p_-$ as in \eqref{2.1x}.
Particularly, when $p_-\in[1,\fz)$,
$\lv$ is a Banach space (see \cite[Theorem 3.2.7]{dhr11}).

\item[(ii)] For any $f\in \wlv$, we have
$\||f|^s\|_{\wlv}=\|f\|_{W\!L^{sp(\cdot)}(\rn)}^s$
(see \cite[Lemma 2.11]{yyyz16}) and it was proved in \cite[Lemma 2.9]{yyyz16} that, for any
$\lz\in(0,\fz)$ and $f,\ g\in \wlv$, $\|\lz f\|_{\wlv}=|\lz|\|f\|_{\wlv}$ and
$$\|f+g\|_{\wlv}^{\underline{p}}\le 2^{\underline{p}}
\lf[\|f\|_{\wlv}^{\underline{p}}+\|g\|_{\wlv}^{\underline{p}}\r].$$
\end{enumerate}
\end{rem}

A function $p(\cdot)\in\cp(\rn)$ is said to satisfy the
\emph{globally log-H\"older continuous condition}, denoted by $p(\cdot)\in C^{\log}(\rn)$,
if there exist positive constants $C_{\log}(p)$ and $C_\fz$, and
$p_\fz\in\rr$ such that, for any $x,\ y\in\rn$,
\begin{equation*}
|p(x)-p(y)|\le \frac{C_{\log}(p)}{\log(e+1/|x-y|)}
\end{equation*}
and
\begin{equation*}
|p(x)-p_\fz|\le \frac{C_\fz}{\log(e+|x|)}.
\end{equation*}

In what follows, denote by $\cs(\rn)$ the \emph{space of all
Schwartz functions} on $\rn$ equipped with the well-known classical topology
and $\cs'(\rn)$ its \emph{topological dual space}
equipped with the weak-$*$ topology.
For any $N\in\nn$, let
\begin{align*}
\cf_N(\rn):=\lf\{\psi\in\cs(\rn):\ \sum_{\bz\in\zz_+^n,\,|\bz|\le N}
\sup_{x\in\rn}\lf[(1+|x|)^N|D^\bz\psi(x)|\r]\le1\r\},
\end{align*}
where, for any $\bz:=(\bz_1,\dots,\bz_n)\in\zz_+^n$,
$|\bz|:=\bz_1+\cdots+\bz_n$ and
$D^\bz:=(\frac\partial{\partial x_1})^{\bz_1}
\cdots(\frac\partial{\partial x_n})^{\bz_n}$.
Then, for any $f\in\cs'(\rn)$, the \emph{radial grand maximal function}
$f^\ast_{N,+}$
of $f$ is defined by setting, for any $x\in\rn$,
\begin{equation}\label{2.8x}
f_{N,+}^\ast(x):=\sup\lf\{|f\ast\psi_t(x)|:\
t\in(0,\fz)\ {\rm and}\ \psi\in\cf_N(\rn)\r\},
\end{equation}
where, for any $t\in(0,\fz)$ and $\xi\in\rn$,
$\psi_t(\xi):=t^{-n}\psi(\xi/t)$.

\begin{rem}\label{r-1214}
For any  $a\in(0,\fz)$, $N\in\nn$, $f\in \cs'(\rn)$ and $x\in\rn$, let
\begin{equation*}
f_{N,\triangledown,a}^\ast(x):=\sup_{\psi\in\cf_N(\rn)}\sup\lf\{|f\ast \psi_t(y)|:\ y\in\rn,\ t\in(0,\fz),\ |y-x|<at\r\}.
\end{equation*}
Then, by an argument similar to that used in the proof of \cite[Proposition 2.1]{zyl16}, we know
that $f_{N,+}^\ast\sim f_{N,\triangledown,a}^\ast$ with the
equivalent positive constants independent of $f$.
\end{rem}

Now we recall the definitions of both the variable Hardy space from Nakai and Sawano
\cite{ns12} and the variable weak Hardy space from Yan et al. \cite{yyyz16}.

\begin{defn}\label{d-hardy}
Let $p(\cdot)\in C^{\log}(\rn)$ and
$N\in(\frac{n}{\underline{p}}+n+1,\fz)$ be a positive integer,
where $\underline{p}$ is as in \eqref{1219a}.
\begin{enumerate}
\item[(i)] The \emph{variable Hardy space} $\hv$ is defined to be the set of all
$f\in\cs'(\rn)$ such that $f_{N,+}^\ast\in \lv$, equipped with the (quasi-)norm
$$\|f\|_{\hv}:=\|f_{N,+}^\ast\|_{\lv}.$$

\item[(ii)] The \emph{variable weak Hardy space} $\whv$ is defined to be the set of all
$f\in\cs'(\rn)$ such that
$f_{N,+}^\ast\in\wlv$, equipped with the {quasi-norm}
$$\|f\|_{\whv}:=\|f_{N,+}^\ast\|_{\wlv}.$$
\end{enumerate}
\end{defn}

The main result of this article is stated as follows.

\begin{thm}\label{t-inter}
Let $p(\cdot)\in C^{\log}(\rn)$ and $\theta\in(0,1)$.
Then it holds true that
\begin{equation}\label{0928x}
(H^{p(\cdot)}(\rn), L^\fz(\rn))_{\theta,\fz}=W\!H^{\wz p(\cdot)}(\rn),
\end{equation}
where $\frac1{\wz p(\cdot)}=\frac{1-\theta}{p(\cdot)}$.
\end{thm}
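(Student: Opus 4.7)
The plan is to prove the two continuous inclusions
\begin{equation*}
W\!H^{\wz p(\cdot)}(\rn)\hookrightarrow (\hv, L^\fz(\rn))_{\theta,\fz}\hookrightarrow W\!H^{\wz p(\cdot)}(\rn).
\end{equation*}
The essential tools are the Calder\'on--Zygmund splitting furnished by Proposition \ref{p-czd}, which drives the first inclusion, and a pointwise truncation inequality, which drives the second. Throughout, the identity $p(\cdot)=(1-\theta)\wz p(\cdot)$ is decisive: it is exactly the homogeneity under which the scalar power $1-\theta$ is compatible across the two scales of variable Lebesgue spaces, so that variable-exponent obstructions collapse into constants.

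For the first inclusion, let $f\in W\!H^{\wz p(\cdot)}(\rn)$ and put $M:=\|f\|_{W\!H^{\wz p(\cdot)}(\rn)}$. For each $\az\in(0,\fz)$, Proposition \ref{p-czd} applied at level $\az$ provides $f=g_\az+b_\az$ with $\|g_\az\|_{L^\fz(\rn)}\ls \az$ and with a quantitative bound of the form $\|b_\az\|_\hv\ls \az\|\chi_{\{f_{N,+}^\ast>\az\}}\|_\lv$, coming from the atomic structure of $b_\az$ and the atomic characterization of $\hv$ from \cite{ns12,Sa13}. Since $|\chi_E|^{1/(1-\theta)}=\chi_E$ for every measurable $E$, the scaling identity in Remark \ref{r-lv}(i) yields $\|\chi_E\|_\lv=\|\chi_E\|_{L^{\wz p(\cdot)}(\rn)}^{1/(1-\theta)}$, and combining this with the definitional bound $\|\chi_{\{f_{N,+}^\ast>\az\}}\|_{L^{\wz p(\cdot)}(\rn)}\le M/\az$ yields $\|b_\az\|_\hv\ls \az^{-\theta/(1-\theta)}M^{1/(1-\theta)}$. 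Therefore
\begin{equation*}
K(t,f;\hv,L^\fz(\rn))\le \|b_\az\|_\hv+t\|g_\az\|_{L^\fz(\rn)}\ls \az^{-\theta/(1-\theta)}M^{1/(1-\theta)}+t\az,
\end{equation*}
and optimizing the right-hand side in $\az$ produces $K(t,f;\hv,L^\fz(\rn))\ls Mt^\theta$, which is the desired interpolation bound.

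For the converse inclusion, let $f\in (\hv,L^\fz(\rn))_{\theta,\fz}$ with $M:=\|f\|_{\theta,\fz}$. For each $\az\in(0,\fz)$, pick $t=t(\az)\sim (M/\az)^{1/(1-\theta)}$ together with a near-optimal decomposition $f=f_0^t+f_1^t$ of the $K$-functional. The choice of $t$ forces $\|f_1^t\|_{L^\fz(\rn)}\le 2Mt^{\theta-1}\ls \az$, hence by Remark \ref{r-1214}, $(f_1^t)_{N,+}^\ast\ls \az$ pointwise; sublinearity of the grand maximal function therefore yields $\{x\in\rn:\ f_{N,+}^\ast(x)>2\az\}\st \{x\in\rn:\ (f_0^t)_{N,+}^\ast(x)>\az\}$ after absorption of constants. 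The pointwise estimate $\chi_{\{(f_0^t)_{N,+}^\ast>\az\}}\le \az^{-(1-\theta)}[(f_0^t)_{N,+}^\ast]^{1-\theta}$, combined with the identity $(1-\theta)\wz p(\cdot)=p(\cdot)$ and Remark \ref{r-lv}(i), gives
\begin{equation*}
\az\lf\|\chi_{\{f_{N,+}^\ast>2\az\}}\r\|_{L^{\wz p(\cdot)}(\rn)}\ls \az^{\theta}\|f_0^t\|_\hv^{1-\theta}\ls \az^{\theta}(Mt^\theta)^{1-\theta}\ls M
\end{equation*}
uniformly in $\az$, whence $\|f\|_{W\!H^{\wz p(\cdot)}(\rn)}\ls M$.

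The principal technical obstacle is establishing the quantitative estimate $\|b_\az\|_\hv\ls \az\|\chi_{\{f_{N,+}^\ast>\az\}}\|_\lv$ inside Proposition \ref{p-czd}: this requires a Whitney-type decomposition of the open set $\{f_{N,+}^\ast>\az\}$, the construction of atoms or molecules with $L^\fz$-size of order $\az$ supported in the Whitney cubes, and comparison with the variable-exponent atomic characterization of $\hv$. A secondary complication is that the quasi-triangle inequality in $\wlv$ of Remark \ref{r-lv}(ii) introduces $\underline{p}$-powers when summing contributions of the good and bad parts; since $\underline{p}\in(0,1]$ is constant, these affect only the hidden constants and not the scaling exponents in the optimization in $\az$ and $t$, but they must be tracked carefully in the passage from pointwise control to quasi-norm control.
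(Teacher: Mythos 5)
Your overall architecture (two inclusions, with Proposition \ref{p-czd} driving the first) matches the paper's, but the first inclusion as written rests on a bound that Proposition \ref{p-czd} does not provide and that is false in general. The proposition gives $\|b_\az\|_{\hv}\ls\|M_\triangledown(f)\chi_{\{M_\triangledown(f)>\az\}}\|_{\lv}$, not $\az\|\chi_{\{M_\triangledown(f)>\az\}}\|_{\lv}$: the bad part must absorb all the peaks of $f$ above level $\az$ (its atomic decomposition runs over all levels $2^i$ with $i>i_0$, each atom having $L^\fz$-size $\sim 2^i$, not $\sim\az$), so its $H^{p(\cdot)}(\rn)$-quasi-norm is controlled by the full size of $M_\triangledown(f)$ on $\{M_\triangledown(f)>\az\}$, which can be much larger than $\az\|\chi_{\{M_\triangledown(f)>\az\}}\|_{\lv}$ (consider a single spike of height $\lambda\gg\az$). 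The final estimate you extract, $\|b_\az\|_{\hv}\ls\az^{-\theta/(1-\theta)}M^{1/(1-\theta)}$, is nevertheless true, but to reach it you must insert a dyadic layer-cake step: split $\{M_\triangledown(f)>\az\}$ into the sets $\{2^j\az<M_\triangledown(f)\le 2^{j+1}\az\}$, apply the $\underline{p}$-triangle inequality of Remark \ref{r-lv}(i), bound each piece by $(2^j\az)\|\chi_{\{M_\triangledown(f)>2^j\az\}}\|_{\lv}\ls(2^j\az)^{-\theta/(1-\theta)}M^{1/(1-\theta)}$ using the scaling identity you quote together with the weak quasi-norm, and sum the resulting geometric series. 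This is exactly the paper's display \eqref{1209v}. With that repair your optimization in $\az$ goes through (and is in fact tidier than the paper's Steps 1)--2), which carry the dyadic sum into the definition of $\az(t)$ and then split into the cases $\frac{1-\theta}{\underline{p}}\le1$ and $\frac{1-\theta}{\underline{p}}>1$).

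Your second inclusion is correct and genuinely different from the paper's: you directly reprove the relevant half of Lemma \ref{l1230c} (choice of $t\sim(M/\az)^{1/(1-\theta)}$, near-optimal splitting of the $K$-functional, Chebyshev-type estimate in $\lv$), whereas the paper applies the sublinear operator $T(f)=f_{N,+}^\ast$ and invokes the Kempka--Vyb\'iral identity $(\lv,L^\fz(\rn))_{\theta,\fz}=W\!L^{\wz p(\cdot)}(\rn)$ as a black box. Your route is more self-contained; the paper's is shorter given the cited lemma. Two small points: the bound $(f_1^t)_{N,+}^\ast\ls\|f_1^t\|_{L^\fz(\rn)}$ follows from $\|\psi_t\|_{L^1(\rn)}\ls1$ for $\psi\in\cf_N(\rn)$, not from Remark \ref{r-1214}; and you should note that a near-optimal decomposition exists for every $t$ precisely because $f\in\hv+L^\fz(\rn)$ by hypothesis.
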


As a consequence of Theorem \ref{t-inter} and \cite[Lemma 3.1]{ns12}, we immediately obtain
the following conclusion.

\begin{cor}\label{cor}
Let $p(\cdot)\in C^{\log}(\rn)$. If $p_-\in(1,\fz)$, then
$\whv=\wlv$
with equivalent quasi-norm.
\end{cor}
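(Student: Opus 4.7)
The plan is to apply Theorem \ref{t-inter} with the exponent scaled so that the resulting Hardy space coincides with a Lebesgue space via \cite[Lemma 3.1]{ns12}, and then invoke the (variable-exponent version of the) classical real interpolation identity between $L^{q(\cdot)}(\rn)$ and $L^\fz(\rn)$ to recover $\wlv$.

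First I would fix $\theta\in(0,1-1/p_-)$, which is possible because $p_-\in(1,\fz)$, and set $q(\cdot):=(1-\theta)p(\cdot)$. Then $q_-=(1-\theta)p_->1$, and $q(\cdot)\in C^{\log}(\rn)$ because multiplying a globally log-H\"older continuous function by a positive constant only rescales the corresponding log-H\"older constants. Moreover $1/p(\cdot)=(1-\theta)/q(\cdot)$, so $\wz q(\cdot)=p(\cdot)$ in the notation of Theorem \ref{t-inter}. Applying that theorem to $q(\cdot)$ (in place of $p(\cdot)$) with this $\theta$ yields
\begin{equation*}
\whv=\lf(H^{q(\cdot)}(\rn),\,L^\fz(\rn)\r)_{\theta,\fz}.
\end{equation*}
Next, since $q_-\in(1,\fz)$, \cite[Lemma 3.1]{ns12} gives $H^{q(\cdot)}(\rn)=L^{q(\cdot)}(\rn)$ with equivalent quasi-norms, so the display becomes
\begin{equation*}
\whv=\lf(L^{q(\cdot)}(\rn),\,L^\fz(\rn)\r)_{\theta,\fz}.
\end{equation*}

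To finish, I would identify the real interpolation space on the right with $\wlv$. This is the variable-exponent analogue of the classical identity $(L^{q_0}(\rn),L^\fz(\rn))_{\theta,\fz}=W\!L^{p_0}(\rn)$ with $1/p_0=(1-\theta)/q_0$, and it is proved by a direct $K$-functional computation: given $f\in L^{q(\cdot)}(\rn)+L^\fz(\rn)$ and $t\in(0,\fz)$, splitting $f=f\chi_{\{|f|\le\lz\}}+f\chi_{\{|f|>\lz\}}$ and optimizing $\lz=\lz(t)$ so that $\|f\chi_{\{|f|>\lz\}}\|_{L^{q(\cdot)}(\rn)}\sim t\lz$ yields a two-sided estimate of $K(t,f;L^{q(\cdot)}(\rn),L^\fz(\rn))$. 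Using the homogeneity $\||g|^s\|_{\lv}=\|g\|_{L^{sp(\cdot)}(\rn)}^s$ from Remark \ref{r-lv}(i), one then converts $\sup_{t\in(0,\fz)}t^{-\theta}K(t,f;L^{q(\cdot)}(\rn),L^\fz(\rn))$ into $\sup_{\lz\in(0,\fz)}\lz\|\chi_{\{|f|>\lz\}}\|_{\lv}$, which is precisely the $\wlv$ quasi-norm. The main obstacle lies in this last step, because the classical argument leans on decreasing rearrangements, while the variable setting forces a direct computation via the Luxemburg modular; however, once the two-sided $K$-functional estimate is established, the corollary follows at once.
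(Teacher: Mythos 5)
Your proposal is correct and follows essentially the same route as the paper: choose $\theta\in(0,1)$ with $(1-\theta)p_->1$, apply Theorem \ref{t-inter} to $q(\cdot):=(1-\theta)p(\cdot)$, identify $H^{q(\cdot)}(\rn)=L^{q(\cdot)}(\rn)$ via \cite[Lemma 3.1]{ns12}, and then use the real interpolation identity $(L^{q(\cdot)}(\rn),L^\fz(\rn))_{\theta,\fz}=W\!L^{p(\cdot)}(\rn)$. The only difference is that the paper simply cites this last identity as Lemma \ref{l1230c} (a special case of \cite[Theorem 4.1]{kv14}) rather than re-deriving it by the $K$-functional computation you sketch, so your extra work there, while plausible, is not needed.
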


\begin{rem}\label{r-inter}
\begin{enumerate}
\item[(i)] When $p(\cdot)\equiv p\in(0,1)$, Theorem \ref{t-inter} goes back
to \cite[Theorem 1]{frs74}, which states that
$$(H^{p}(\rn),L^\fz(\rn))_{\theta,\fz}=W\!H^{p/(1-\theta)}(\rn),\quad \theta\in(0,1).$$
\item[(ii)] When $p(\cdot)\equiv 1$, \eqref{0928x} becomes
$$(H^1(\rn),L^\fz(\rn))_{\theta,\fz}=W\!H^{1/(1-\theta)}(\rn)
=W\!L^{1/(1-\theta)}(\rn),\quad \theta\in(0,1),$$
which was presented in \cite[(2)]{rs73}.

\item[(iii)] When $p(\cdot)\equiv p\in(1,\fz)$, \eqref{0928x} is a special case
of \cite[Theorem 7]{rs73}, namely,
$$(L^p(\rn),L^\fz(\rn))_{\theta,\fz}=W\!L^{p/(1-\theta)}(\rn),\quad \theta\in(0,1).$$
\end{enumerate}
\end{rem}

The proofs of   Theorem \ref{t-inter} and Corollary \ref{cor} are presented in
Section \ref{s-2}.

The main and key step in the proof of Theorem \ref{t-inter}
is to  decompose any distribution $f$ from the variable weak Hardy space
$W\!H^{\wz p(\cdot)}(\rn)$ into ``good" and ``bad" parts
(see Proposition \ref{p-czd} below). The vector-valued inequality of
the Hardy-Littlewood maximal function on the variable Lebesgue
space, obtained by Cruz-Uribe and Fiorenza \cite[Corollary 2.1]{cf06}
(see also Lemma \ref{l1230-a} below),
and the atomic characterization of $H^{p(\cdot)}(\rn)$ via
$(p(\cdot),\fz)$-atoms established by Nakai and Sawano \cite[Theorem 4.5]{ns12}
(see also Lemma \ref{l1230b} below),
play the key roles in the proof of Proposition \ref{p-czd}.
By using this proposition and some ideas from the proof of
\cite[Theorem 4.1]{kv14}, we further prove that
\begin{equation}\label{1212a}
W\!H^{\wz p(\cdot)}(\rn)\st (H^{p(\cdot)}(\rn), L^\fz(\rn))_{\theta,\fz},
\end{equation}
where $\theta,\ p(\cdot)$ and $\wz p(\cdot)$ are as in Theorem \ref{t-inter}.
The converse part of \eqref{1212a} is proved
by applying the sublinear operator $T(f):=f_{N,+}^\ast$,
where $N$ is as in Definition \ref{d-hardy},
to the real interpolation property between $L^{p(\cdot)}(\rn)$ and $L^\fz(\rn)$,
which is a special case of \cite[Theorem 4.1]{kv14} of Kempka and Vyb\'iral
when $p(\cdot)=q_0(\cdot)$ and $q=\fz$ (see also Lemma \ref{l1230c} below).
Applying this last real interpolation property, between $L^{p(\cdot)}(\rn)$ and $L^\fz(\rn)$,
and Theorem \ref{t-inter}, we immediately obtain Corollary \ref{cor}.

Here we point out that the approach used in the proof of \eqref{1212a} is
quite different from that used in the proof of \cite[Theorem 1]{frs74}
(see Remark \ref{r-inter}(i) below).
Indeed,  in \cite[Theorem 1]{frs74}, it seems to be only proved that the embedding
$$\lf[W\!H^{\wz p}(\rn)\cap \cs(\rn)\r]\st (H^{p}(\rn),L^\fz(\rn))_{\theta,\fz}$$
holds true instead of $W\!H^{\wz p}(\rn)\st (H^{p}(\rn),L^\fz(\rn))_{\theta,\fz}$,
since  the proof of \cite[Theorem 1]{frs74} strongly depends on
the decomposition obtained in \cite[Lemma A]{frs74} (see also Remark \ref{r-decom} below),
which was proved only for any $f\in \cs(\rn)$ instead of all distributions
and, more importantly, the Schwartz class is not dense in the space $W\!H^{\wz p}(\rn)$
(see, for example, \cite{gh16,He14}).
To overcome this gap and difficulty, in this article,
we establish a decomposition of any distribution of $f\in W\!H^{\wz p(\cdot)}(\rn)$ into
``good" and ``bad" parts in Proposition \ref{p-czd}
via a modification of technique due to Calder\'on \cite{c77}
and some ideas from the proof of \cite[Theorem 4.4]{yyyz16} in which the
atomic characterizations of variable weak Hardy spaces were obtained.

Finally, we make some conventions on notation. Let $\nn:=\{1,2,\dots\}$,
$\zz_+:=\nn\cup\{0\}$ and $\rr_+^{n+1}:=\rn\times(0,\fz)$.
We denote by $C$ a \emph{positive constant}
which is independent of the main parameters,
but may vary from line to line. The \emph{symbol}
$f\ls g$ means $f\le Cg$. If $f\ls g$ and $g\ls f$, we then write $f\sim g$.
If $E$ is a subset of $\rn$, we denote by $\chi_E$ its
\emph{characteristic function} and by $E^\complement$
the set $\rn\backslash E$.
For any $x\in\rn$ and $r\in(0,\fz)$, denote by $Q(x,r)$
the cube centered at $x$ with side length $r$, whose sides are parallel to the axes
of coordinates. For each cube $Q\st\rn$, we use $x_Q$ to denote its center
and $\ell(Q)$ its side length and, for any $a\in(0,\fz)$, denote by $aQ$
the cube concentric with $Q$ having the side length $a\ell(Q)$.
We use $\vec0_n$ to denote the \emph{origin} of $\rn$. For any $a\in\rr$, let $\lfloor a\rfloor$
be the maximal integer not bigger than $a$.

\section{A key decomposition\label{s2-x}}

\hskip\parindent
In this section, we aim to establish a decomposition
of any distribution $f$ belonging to
$W\!H^{p(\cdot)/(1-\theta)}(\rn)$
into ``good" and
``bad" parts in Proposition \ref{p-czd} below, which plays a key role in the
proofs of Theorem \ref{t-inter} and Corollary \ref{cor} in Section \ref{s-2}.

We begin with some notation. For any $p(\cdot)\in \cp(\rn)$,
let $\psi\in\cs(\rn)$ be such that $\supp \psi\st B(\vec 0_n,1)$, $\int_\rn\psi(x) x^\gamma\,dx=0$
for any $\gamma\in\zz_+^n$ with $|\gamma|\le s_0$,
where
\begin{equation}\label{1218a}
s_0:=\lfloor n(1/p_--1)\rfloor\ {\rm with}\ p_-\ {\rm as\ in}\ \eqref{2.1x}.
\end{equation}
Then there exists $\phi\in\cs(\rn)$ satisfying
that $\wh\phi$ has compact support away from the origin and, for any
$x\in \rn\backslash\{\vec 0_n\}$,
\begin{equation*}
\int_0^\fz\wh\psi(tx)\wh\phi(tx)\,\frac{dt}t=1;
\end{equation*}
see, for example, \cite[(3.1)]{c77}.
Define a function $\eta$ on $\rn$ by setting, for any $x\in \rn \backslash\{\vec 0_n\}$,
$$\wh\eta(x):=\int_1^\fz\wh\psi(tx)\wh\phi(tx)\,\frac{dt}t$$
and $\wh\eta(\vec 0_n):=1$. Then $\eta$ is infinitely differentiable,
has compact support and equals 1 near the origin (see \cite[p.\,219]{c77}).
Moreover, for any $t_0,\ t_1\in(0,\fz)$ and $x\in \rn \backslash\{\vec 0_n\}$,
\begin{equation}\label{1018-y}
\int_{t_0}^{t_1}\wh\psi(tx)\wh\phi(tx)\,\frac{dt}t=\wh\eta(t_0x)-\wh\eta(t_1x).
\end{equation}

Let $x_0:=(\overbrace{2,\,...\,,2}^{n\,{\rm times}})\in\rn$ and $f\in \whv$.
For any $x\in\rn$, let
$$\wz\phi(x):=\phi(x-x_0),\quad \wz\psi(x):=\psi(x+x_0)$$
and, for any $t\in(0,\fz)$,
\begin{equation}\label{fandg}
F(x,t):=f\ast \wz\phi_t(x),\quad G(x,t):=f\ast \eta_t(x).
\end{equation}
Then, using \eqref{1018-y}, we have, for any $t_0,\ t_1\in(0,\fz)$ and
$x\in \rn \backslash\{\vec 0_n\}$,
\begin{equation}\label{1019-z}
\int_{t_0}^{t_1}\int_\rn F(y,t)\wz \psi(x-y)\,\frac{dydt}{t}=G(x,t_0)-G(x,t_1),
\end{equation}
and, by \cite[p.\,220]{c77} and the fact that $f\in\cs'(\rn)$, we know that
\begin{equation*}
f=\lim_{\gfz{\ez\to0}{\delta\to\fz}}\int_{\ez}^\delta
\int_\rn F(y,t)\wz \psi (\cdot-y)\,\frac{dydt}{t}
\end{equation*}
converges in $\cs'(\rn)$. Now, for any $x\in\rn$, let
\begin{equation}\label{1018-x}
M_\triangledown(f)(x):=\sup_{t\in(0,\fz),\,|y-x|\le
3(|x_0|+1)t}[|F(y,t)|+|G(y,t)|].
\end{equation}
Then $M_\triangledown(f)$ is lower semi-continuous and,  by
Remark \ref{r-1214} and \cite[Corollary 3.8]{yyyz16}, we further know that
$M_\triangledown(f)\in \wlv$ and, moreover,
\begin{equation}\label{1207y}
\|M_\triangledown(f)\|_{\wlv}\sim \|f\|_{\whv}
\end{equation}
with the implicit equivalent positive constants independent of $f$.

Now we have the following decomposition for elements of the variable weak Hardy space.

\begin{prop}\label{p-czd}
Assume that $\theta\in(0,1)$, $p(\cdot)$ and $\wz p(\cdot)$ are as in Theorem \ref{t-inter}.
Let $f\in W\!H^{\wz p(\cdot)}(\rn)$ and $\az\in(0,\fz)$. Then there exist $g_\az\in L^\fz(\rn)$
and $b_\az\in \cs'(\rn)$ such that $f=g_\az+b_\az$ in $\cs'(\rn)$,
$
\|g_\az\|_{L^\fz(\rn)}\le C_1\az
$
and
\begin{equation}\label{1209u}
\|b_\az\|_{H^{p(\cdot)}(\rn)}
\le C_2\|M_\triangledown(f)\chi_{\{x\in\rn:\ M_\triangledown(f)(x)>\az\}}\|_{\lv}<\fz,
\end{equation}
where $C_1$ and $C_2$ are two positive constants independent of $f$ and $\az$.
\end{prop}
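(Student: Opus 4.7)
The plan is to run a Calder\'on--Zygmund type decomposition of $f$ at level $\alpha$, based on the reproducing formula displayed just before the statement. Set $\Omega_\alpha:=\{x\in\rn:\ M_\triangledown(f)(x)>\az\}$, which is open since $M_\triangledown(f)$ is lower semicontinuous, and by \eqref{1207y} together with $f\in W\!H^{\wz p(\cdot)}(\rn)$ we have $\|\chi_{\Omega_\alpha}\|_{L^{\wz p(\cdot)}(\rn)}<\fz$; in particular $|\Omega_\alpha|<\fz$. Apply a Whitney decomposition to write $\Omega_\alpha=\bigcup_i Q_i$, where $\{Q_i\}$ is a family of dyadic cubes with bounded overlap such that $\diam(Q_i)\sim \dist(Q_i,\Omega_\alpha^\complement)$, and fix a smooth partition of unity $\{\phi_i\}$ subordinate to a slight dilation of $\{Q_i\}$. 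Using the coordinate change dictated by the Calder\'on shift $x_0$, lift this to a partition in the upper half-space by setting $\Phi_i(y,t):=\phi_i(y-tx_0)\chi_{(0,c\ell(Q_i)]}(t)$ (with $c$ chosen so that the $y$-support of $\Phi_i$ stays within the Whitney cube's influence region), and put $\Phi_\fz:=1-\sum_i \Phi_i$.

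With this choice I would split the Calder\'on reproducing formula into
\begin{align*}
g_\az(x)&:=\int_0^\fz\!\!\int_\rn F(y,t)\wz\psi_t(x-y)\Phi_\fz(y,t)\,\frac{dy\,dt}{t},\\
b_{\az,i}(x)&:=\int_0^\fz\!\!\int_\rn F(y,t)\wz\psi_t(x-y)\Phi_i(y,t)\,\frac{dy\,dt}{t},
\end{align*}
and set $b_\az:=\sum_i b_{\az,i}$, with convergence in $\cs'(\rn)$ controlled via the tail estimates on $F$ produced by Remark \ref{r-1214} and \eqref{1207y}. The pointwise bound $\|g_\az\|_{L^\fz(\rn)}\ls\az$ then follows because, on $\supp\Phi_\fz$, the point $y-tx_0$ must lie within a controlled multiple of $t$ from $\Omega_\alpha^\complement$, so that the vertical integral of $|F(y,t)|\wz\psi_t(x-y)$ is dominated, via the Calder\'on trick embodied in \eqref{1019-z} and the defining cone in \eqref{1018-x}, by $M_\triangledown(f)$ evaluated at some point of $\Omega_\alpha^\complement$ in the tube above $y$, hence by $\alpha$.

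For the bad part, each $b_{\az,i}$ is supported in $c'Q_i$ (because of the supports of $\wz\psi$ and $\Phi_i$), has vanishing moments up to order $s_0$ in \eqref{1218a} (since $\psi$, hence $\wz\psi$, does), and satisfies a uniform bound $\|b_{\az,i}\|_{L^\fz(\rn)}\ls \az$ by the same estimate on $|F(y,t)|$ over $\supp\Phi_i$ (here one uses that the Whitney cube $Q_i$ has a neighbor in $\Omega_\alpha^\complement$ where $M_\triangledown(f)\le\az$). Consequently, $\lz_i:=C\az\|\chi_{Q_i}\|_{\lv}$ and $a_i:=\lz_i^{-1}b_{\az,i}$ realise each $b_{\az,i}$ as a multiple of a $(p(\cdot),\fz,s_0)$-atom in the Nakai--Sawano sense.

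Plugging into the atomic characterization of $H^{p(\cdot)}(\rn)$ recalled in the overview of the proof of Theorem \ref{t-inter}, together with the Fefferman--Stein type vector-valued maximal inequality on $\lv$ quoted from \cite{cf06}, I would estimate
$$
\|b_\az\|_{\hv}\ls \lf\|\sum_i \frac{\lz_i\chi_{Q_i}}{\|\chi_{Q_i}\|_{\lv}}\r\|_{\lv}\sim \az\|\chi_{\Omega_\az}\|_{\lv}\ls \|M_\triangledown(f)\chi_{\Omega_\az}\|_{\lv},
$$
where the last step uses $\az\chi_{\Omega_\az}\le M_\triangledown(f)\chi_{\Omega_\az}$; this yields \eqref{1209u}. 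The hardest point I anticipate is the rigorous justification of the $L^\fz$ bound on $g_\az$ via the Calder\'on-shifted cone in \eqref{1018-x}: one must show that, for every $(y,t)\in\supp\Phi_\fz$, some admissible point $x\in\Omega_\az^\complement$ is tied to $(y,t)$ by the cone relation defining $M_\triangledown(f)$, so that $|F(y,t)|\le M_\triangledown(f)(x)\le\az$; this is exactly where the choice of the shift $x_0=(2,\dots,2)$ and the aperture $3(|x_0|+1)$ in \eqref{1018-x} are used, and it is the technical heart of the argument. The convergence of $\sum_i b_{\az,i}$ in $\cs'(\rn)$ and the identity $f=g_\az+b_\az$ are then obtained by passing to the limit $\ez\to0$, $\delta\to\fz$ in the truncated reproducing formula, using the uniform decay of $F$ and $G$ provided by $M_\triangledown(f)\in\wlv$.
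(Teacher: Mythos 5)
Your decomposition is performed at the single level $\az$, whereas the paper's proof (following Calder\'on \cite{c77} and \cite{yyyz16}) is a \emph{layered} decomposition over all dyadic levels: one sets $\Omega_i:=\{M_\triangledown(f)>2^i\}$ for every $i\in\zz$, cuts $\rr^{n+1}_+$ into the layers $\wz\Omega_i\setminus\wz\Omega_{i+1}$, splits each layer further by the Whitney cubes of $\Omega_i$, and takes $g_\az=\sum_{i\le i_0}b_i$ and $b_\az=\sum_{i>i_0}\sum_{j}b_{i,j}$ with $2^{i_0}\sim\az$. This multi-level structure is not a technicality: it is exactly what makes the bad part an $L^\fz$-normalized atomic sum. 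The gap in your argument is the claim that $\|b_{\az,i}\|_{L^\fz(\rn)}\ls\az$. On $\supp\Phi_i$ one only knows $t\ls\ell(Q_i)$ and $y$ near $Q_i$; for small $t$ and $y$ deep inside the Whitney cube, the cone $\{x:\ |y-x|\le 3(|x_0|+1)t\}$ from \eqref{1018-x} does not reach $\Omega_\az^{\complement}$, so $|F(y,t)|$ is controlled only by values of $M_\triangledown(f)$ \emph{inside} $\Omega_\az$, which exceed $\az$ and are in general unbounded. (The Whitney neighbour in $\Omega_\az^{\complement}$ helps only when $t\gs\ell(Q_i)$.) The paper repairs precisely this point by restricting the $(y,t)$-integration defining $b_{i,j}^{\epsilon}$ to the complement of $\wz\Omega_{i+1}$, which guarantees a point $x^\ast\in\Omega_{i+1}^{\complement}$ with $|y-x^\ast|\le 2(|x_0|+1)t$ and hence $|F(y,t)|\le 2^{i+1}$.

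A symptom of the same problem is that your final bound $\|b_\az\|_{\hv}\ls\az\|\chi_{\Omega_\az}\|_{\lv}$ is strictly stronger than \eqref{1209u} and cannot hold. Indeed, $\|g_\az\|_{L^\fz(\rn)}\ls\az$ forces $(b_\az)^\ast_{N,+}\ge f^\ast_{N,+}-C\az$ pointwise, whence $\|b_\az\|_{\hv}\gs\|f^\ast_{N,+}\chi_{\{f^\ast_{N,+}>2C\az\}}\|_{\lv}$; for a single tall bump (say $f=\lambda a$ with $a$ a fixed atom and $\lambda\to\fz$, $\az$ fixed) this grows like $\lambda$, while $\az\|\chi_{\Omega_\az}\|_{\lv}$ grows only like a power of $\lambda$ strictly less than one. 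The correct right-hand side arises from the weighted sum
\begin{equation*}
\biggl\|\biggl\{\sum_{i>i_0}2^{i\underline{p}}\chi_{\Omega_i}\biggr\}^{1/\underline{p}}\biggr\|_{\lv}
\sim\bigl\|M_\triangledown(f)\chi_{\{M_\triangledown(f)>\az\}}\bigr\|_{\lv},
\end{equation*}
which only the multi-level decomposition produces. Your treatment of the good part (via the cancellation \eqref{1019-z}) and of the convergence issues is in the right spirit, but the construction of $b_\az$ must be redone along the layered lines above.
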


In what follows, to simplify the presentation of this article, 
for any measurable function $g$ and $\az,\,\bz\in(0,\fz)$ with $\az<\bz$,
we \emph{always write} $\chi_{\{x\in\rn:\ g(x)>\az\}}$ or $\chi_{\{x\in\rn:\ \az<g(x)\le\bz\}}$ simply by 
$\chi_{\{g>\az\}}$ or $\chi_{\{\az<g\le\bz\}}$.

\begin{rem}\label{r-decom}
It was established in
\cite[Lemma A]{frs74} that, if $f\in\cs(\rn)$ and $\az\in(0,\fz)$, then $f$ can be written as
the sum of two functions, $g$ and $b$, such that
$\|g\|_{L^\fz(\rn)}\le \wz C_1\az$ and
$$\|b\|_{H^p(\rn)}\le \wz C_2\|M_\triangledown(f)\chi_{\{M_\triangledown(f)>\az\}}\|_{L^p(\rn)},$$
where $C_1$ and $C_2$ are two positive constants independent of $\az$ and $f$.
Comparing with \cite[Lemma A]{frs74}, Proposition \ref{p-czd} presents a decomposition
of any distribution from $W\!H^{\wz p(\cdot)}(\rn)$ and, in this sense, it is a
very useful improvement of \cite[Lemma A]{frs74}.
\end{rem}

To prove Proposition \ref{p-czd}, we need the following vector-valued inequality of the
Hardy-Littlewood maximal operator $\cm$ on the variable Lebesgue space, which was obtained by
Cruz-Uribe and Fiorenza \cite[Corollary 2.1]{cf06}. Here and hereafter, the operator
$\cm$ is defined by setting, for any locally integrable function $f$ on $\rn$ and $x\in\rn$,
$$\cm(f)(x):=\sup_{B\ni x}\frac1{|B|}\int_B|f(y)|\,dy,$$
where the supremum is taken over all balls $B$ of $\rn$ containing $x$.

\begin{lem}\label{l1230-a}
Let $r\in(1,\fz)$. Assume that $p(\cdot)\in C^{\log}(\rn)$ satisfies $1<p_-\le p_+<\fz$.
Then there exists a positive constant $C$ such that, for any sequence $\{f_j\}_{j\in\nn}$
of measurable functions,
$$\lf\|\lf\{\sum_{j\in\nn}[\cm(f_j)]^r\r\}^{1/r}\r\|_{\lv}
\le C\lf\|\lf(\sum_{j\in\nn}|f_j|^r\r)^{1/r}\r\|_{\lv}.$$
\end{lem}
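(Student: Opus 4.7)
The plan is to reduce the variable-exponent vector-valued inequality to a family of classical weighted scalar inequalities via the Rubio de Francia extrapolation method adapted to variable Lebesgue spaces, which is the approach of Cruz-Uribe and Fiorenza in \cite{cf06}.

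First, I would fix an auxiliary exponent $p_0\in(1,p_-)$ and record the weighted vector-valued Fefferman-Stein inequality of Andersen-John: for every Muckenhoupt weight $w\in A_{p_0}$,
\begin{equation*}
\left\|\left(\sum_{j\in\nn}[\cm(f_j)]^r\right)^{1/r}\right\|_{L^{p_0}(w)}
\le C([w]_{A_{p_0}})\left\|\left(\sum_{j\in\nn}|f_j|^r\right)^{1/r}\right\|_{L^{p_0}(w)}.
\end{equation*}
This is classical and can be derived, e.g., by linearizing the $\ell^r$-norm through a duality pairing between $L^{p_0/r}(w)$ and its weighted conjugate space, and then invoking the scalar weighted boundedness of $\cm$ on both factors.

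Second, I would apply the off-diagonal extrapolation theorem for variable Lebesgue spaces (Cruz-Uribe-Fiorenza-Martell-P\'erez): since the family of pairs $(F,G):=((\sum_j|f_j|^r)^{1/r},(\sum_j[\cm(f_j)]^r)^{1/r})$ satisfies the weighted estimate above at the single exponent $p_0$ for every $w\in A_{p_0}$, and since $p(\cdot)\in C^{\log}(\rn)$ satisfies $p_0<p_-\le p_+<\fz$, the theorem delivers $\|G\|_{\lv}\ls\|F\|_{\lv}$, which is exactly the claim.

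The main obstacle is the extrapolation theorem itself. Its proof rests on a Rubio de Francia iteration: for a nonnegative $h\in L^{(p(\cdot)/p_0)'}(\rn)$, build
\begin{equation*}
\ccr h(x):=\sum_{k=0}^{\fz}\frac{\cm^k h(x)}{2^k\|\cm\|^k},
\end{equation*}
where $\|\cm\|$ denotes the operator norm of $\cm$ on $L^{(p(\cdot)/p_0)'}(\rn)$. This yields $h\le\ccr h$ pointwise, $\|\ccr h\|_{L^{(p(\cdot)/p_0)'}(\rn)}\le 2\|h\|_{L^{(p(\cdot)/p_0)'}(\rn)}$, and $[\ccr h]_{A_1}\le 2\|\cm\|$. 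The crucial prerequisite is that $\cm$ is bounded on $L^{(p(\cdot)/p_0)'}(\rn)$, which follows from Diening's theorem because $p(\cdot)\in C^{\log}(\rn)$ and $p_0<p_-$ together imply $(p(\cdot)/p_0)'\in C^{\log}(\rn)$ with lower bound strictly greater than $1$. To close, I would write $\|G\|_{\lv}^{p_0}=\|G^{p_0}\|_{L^{p(\cdot)/p_0}(\rn)}$, test this variable-exponent norm against $\ccr h$ through the generalized H\"older inequality between $L^{p(\cdot)/p_0}(\rn)$ and its associate space, apply the scalar weighted vector-valued inequality with weight $w=\ccr h\in A_{p_0}$, and close via a final H\"older step to recover $\|F\|_{\lv}^{p_0}$. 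The lack of homogeneity in the variable-exponent modular is precisely what forces the argument to be run through associate norms rather than through the familiar $L^p$-$L^{p'}$ duality.
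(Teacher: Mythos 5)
The paper does not actually prove this lemma: it simply quotes it from Cruz-Uribe, Fiorenza, Martell and P\'erez \cite[Corollary 2.1]{cf06}, where it is established by exactly the Rubio de Francia extrapolation argument you outline (the weighted vector-valued Fefferman--Stein inequality at a fixed $p_0\in(1,p_-)$ for all $A_{p_0}$ weights, combined with the iteration algorithm built from the boundedness of $\cm$ on $L^{(p(\cdot)/p_0)'}(\rn)$, which holds since $p(\cdot)\in C^{\log}(\rn)$ and $p_0<p_-$). Your sketch is correct and follows the same route as the cited source, so there is nothing to add.
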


The atomic characterization of $\hv$, obtained by Nakai and Sawano \cite{ns12}, also
plays a key role in the proof of Proposition \ref{p-czd}. The following notions of
$(p(\cdot),\fz)$-atoms and the atomic Hardy space $H_{\rm atom}^{p(\cdot),\fz}(\rn)$
come from \cite[Definition 1.4]{ns12}
and \cite[Definition 1.5]{ns12} of Nakai and Sawano, respectively.

\begin{defn}\label{d1230}
Let $p(\cdot)\in C^{\log}(\rn)$.
\begin{enumerate}
\item[(i)] A measurable function $a$ on $\rn$ is called a \emph{$(p(\cdot),\fz)$-atom} if there exists a
cube $Q\st \rn$ such that $\supp a\st Q$,
$$\|a\|_{L^\fz(\rn)}\le \frac1{\|\chi_Q\|_{\lv}}$$
and $\int_\rn a(x)x^\az\,dx=0$ for any $\az\in \zz_+^n$ with
$|\az|\le \lfloor n(\frac1{p_-}-1)\rfloor$.

\item[(ii)] The \emph{variable atomic Hardy space $H_{\rm atom}^{p(\cdot),\fz}(\rn)$}
is defined as the space of all $f\in \cs'(\rn)$ such that $f=\sum_{j\in\nn}\lz_j a_j$ in
$\cs'(\rn)$, where $\{\lz_j\}_{j\in\nn}$ is a sequence of non-negative numbers and
$\{a_j\}_{j\in\nn}$ is a sequence of $(p(\cdot),\fz)$-atoms, associated with
cubes $\{Q_j\}_{j\in\nn}$ of $\rn$. Moreover, for any $f\in H_{\rm atom}^{p(\cdot),\fz}(\rn)$,
let
$$\|f\|_{H_{\rm atom}^{p(\cdot),\fz}(\rn)}
:=\inf\lf\{\lf\|\lf[\sum_{j\in\nn}\lf\{\frac{\lz_j\chi_{Q_j}}
{\|\chi_{Q_j}\|_{\lv}}\r\}^{\underline{p}}\r]^{1/\underline{p}}\r\|_{\lv}\r\},$$
where $\underline{p}$ is as in \eqref{1219a} and the infimum is taken over all admissible
decompositions of $f$ as above.
\end{enumerate}
\end{defn}

The following atomic characterization of $\hv$ by means of $(p(\cdot),\fz)$-atoms is
a part of \cite[Theorem 4.5]{ns12} of Nakai and Sawano.

\begin{lem}\label{l1230b}
Let $p(\cdot)\in C^{\log}(\rn)$. Then $\hv=H_{\rm atom}^{p(\cdot),\fz}(\rn)$ with
equivalent quasi-norms.
\end{lem}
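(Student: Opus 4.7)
The plan is to prove both inclusions $H_{\rm atom}^{p(\cdot),\fz}(\rn) \subset \hv$ and $\hv \subset H_{\rm atom}^{p(\cdot),\fz}(\rn)$ with matching quasi-norm inequalities; the core of the argument is an atomic decomposition constructed at every dyadic height simultaneously, with all norm estimates routed through the vector-valued maximal inequality of Lemma~\ref{l1230-a}.

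For the easy direction $H_{\rm atom}^{p(\cdot),\fz}(\rn) \hookrightarrow \hv$, I would first control the radial grand maximal function of a single $(p(\cdot),\fz)$-atom $a$ supported in a cube $Q$. On $2\sqrt n\,Q$ one uses $\|a\|_{L^\fz(\rn)}\le \|\chi_Q\|_\lv^{-1}$ to bound $a_{N,+}^\ast$ pointwise; on the complement, the vanishing moments of order $s_0$ combined with a Taylor expansion of any test function $\psi\in\cf_N(\rn)$ yield
\begin{equation*}
a_{N,+}^\ast(x) \ls \|\chi_Q\|_\lv^{-1}\lf[\frac{\ell(Q)}{|x-x_Q|}\r]^{n+s_0+1} \ls \|\chi_Q\|_\lv^{-1}[\cm(\chi_Q)(x)]^{(n+s_0+1)/n},
\end{equation*}
provided $N$ in Definition~\ref{d-hardy} is large enough. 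For a general decomposition $f=\sum_j\lz_ja_j$, raise the pointwise estimate to the power $\underline p$ and apply Lemma~\ref{l1230-a} with $r := \underline p(n+s_0+1)/n$; the choice of $s_0$ in \eqref{1218a} is precisely what forces $rp(\cdot)/\underline p$ to have lower bound strictly greater than $1$, so the vector-valued inequality applies and yields
\begin{equation*}
\|f_{N,+}^\ast\|_\lv \ls \lf\|\lf[\sum_j\lf\{\frac{\lz_j\chi_{Q_j}}{\|\chi_{Q_j}\|_\lv}\r\}^{\underline p}\r]^{1/\underline p}\r\|_\lv.
\end{equation*}
Taking the infimum over admissible decompositions completes this direction.

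For the converse $\hv\hookrightarrow H_{\rm atom}^{p(\cdot),\fz}(\rn)$, given $f\in\hv$, I would simultaneously Calder\'on--Zygmund decompose $f$ at every dyadic level $\az=2^k$, $k\in\zz$, against the level set $\boz_k := \{f_{N,+}^\ast>2^k\}$. A Whitney decomposition $\boz_k = \bigcup_j Q_{k,j}$ and a smooth partition of unity $\{\zeta_{k,j}\}_j$ subordinate to a fixed dilate of this family produce bad pieces $b_{k,j}$, each supported in a fixed dilate of $Q_{k,j}$ and satisfying $\int_\rn b_{k,j}(y)y^\gz\,dy=0$ for $|\gz|\le s_0$, together with a good part $g_k := f - \sum_j b_{k,j}$ obeying $\|g_k\|_{L^\fz(\rn)}\ls 2^k$. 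Telescoping in $\cs'(\rn)$,
\begin{equation*}
f = \sum_{k\in\zz}(g_{k+1}-g_k),
\end{equation*}
where the series converges because $\|g_k\|_{L^\fz(\rn)}\ls 2^k\to 0$ as $k\to-\fz$ and $|\boz_k|\to 0$ as $k\to+\fz$. Reorganizing each difference $g_{k+1}-g_k$ by subtracting appropriate polynomials on the sub-cubes $\{Q_{k+1,i}\}_i$ of a dilate of $Q_{k,j}$ produces a sum $\sum_{k,j}\lz_{k,j}a_{k,j}$ in which each $a_{k,j}$ is, up to a universal constant, a $(p(\cdot),\fz)$-atom supported in a fixed dilate of $Q_{k,j}$ with $\lz_{k,j}\sim 2^k\|\chi_{Q_{k,j}}\|_\lv$. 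The atomic quasi-norm then reduces to proving
\begin{equation*}
\lf\|\lf[\sum_{k,j}\lf\{\frac{\lz_{k,j}\chi_{Q_{k,j}}}{\|\chi_{Q_{k,j}}\|_\lv}\r\}^{\underline p}\r]^{1/\underline p}\r\|_\lv \ls \|f_{N,+}^\ast\|_\lv,
\end{equation*}
which follows from the bounded overlap of $\{Q_{k,j}\}_j$ for each fixed $k$, the containment $Q_{k,j}\subset\boz_k$, and the pointwise equivalence $[f_{N,+}^\ast]^{\underline p}\sim\sum_k 2^{k\underline p}\chi_{\boz_k\setminus\boz_{k+1}}$.

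The main obstacle is the delicate interaction between the Calder\'on--Zygmund construction and the variable exponent. In the classical case one exploits $\|\chi_Q\|_{L^p(\rn)}=|Q|^{1/p}$ directly, but here one must invoke the log-H\"older continuity of $p(\cdot)$ to obtain the local equivalence $\|\chi_Q\|_\lv\sim |Q|^{1/p(x_Q)}$ on small cubes and a $p_\fz$-type equivalence on large ones, and then propagate this scaling through the atomic sum by means of the vector-valued inequality of Cruz-Uribe--Fiorenza rather than any naive level-set integration. A secondary but non-trivial step is the rigorous justification of convergence of the telescoping sum in $\cs'(\rn)$: in the negative direction one uses $\|g_k\|_{L^\fz(\rn)}\ls 2^k$, and in the positive direction one uses $|\boz_k|\to 0$ together with Calder\'on's reproducing formula to show $b_k\to 0$ in $\cs'(\rn)$.
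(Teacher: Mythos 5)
The paper does not actually prove this lemma: it is imported wholesale as part of Nakai and Sawano's atomic characterization (\cite[Theorem 4.5]{ns12}), so there is no internal proof to compare against. Your outline is, in substance, the standard proof of that cited result: the grand maximal function of a single $(p(\cdot),\fz)$-atom is dominated near its cube by $\|\chi_Q\|_{\lv}^{-1}$ and away from it by $\|\chi_Q\|_{\lv}^{-1}[\cm(\chi_Q)]^{(n+s_0+1)/n}$, then summed via the vector-valued inequality of Lemma \ref{l1230-a}; the converse is a Calder\'on--Zygmund decomposition of $f$ at every dyadic level of $f^\ast_{N,+}$ (Whitney cubes, smooth partition of unity, moment corrections of degree $\le s_0$), reassembled into atoms with coefficients controlled through $\sum_k 2^{k\underline{p}}\chi_{\boz_k\setminus\boz_{k+1}}\ls [f^\ast_{N,+}]^{\underline{p}}$ pointwise. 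This is the same route as in \cite{ns12}, and as a sketch it is essentially sound.

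Two steps are thinner than your summary suggests. First, the exponent bookkeeping in the easy direction tacitly assumes $p_-\le 1$: if $p_->1$, then $s_0$ in \eqref{1218a} is negative, the atoms of Definition \ref{d1230} carry no cancellation, and $r=\underline{p}(n+s_0+1)/n\le 1$, so Lemma \ref{l1230-a} cannot be invoked as you state; this case needs a separate (easier) argument, for instance via the boundedness of $\cm$ on $\lv$ together with the identification $\hv=\lv$ when $p_->1$. Second, in the converse direction the convergence claims need more than you give: the bound $\|g_k\|_{L^\fz(\rn)}\ls 2^k$ for a general distribution must come from grand-maximal-function estimates on $\boz_k^\complement$ rather than pointwise values of $f$, and $b_k\to 0$ in $\cs'(\rn)$ as $k\to\fz$ does not follow from $|\boz_k|\to 0$ alone --- one needs the standard estimate of $(b_k)^\ast_{N,+}$ by $f^\ast_{N,+}$ on $\boz_k$ plus decaying tails off it, so that its $\lv$-quasi-norm (or modular) tends to $0$; Calder\'on's reproducing formula plays no role at this point. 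Both repairs are standard and are carried out in \cite{ns12}.
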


Now we can show Proposition \ref{p-czd} by using Lemmas \ref{l1230-a} and \ref{l1230b} as follows.

\begin{proof}[Proof of Proposition \ref{p-czd}]
Let $f\in W\!H^{\wz p(\cdot)}(\rn)$ and, for any $i\in\zz$,
$$\boz_i:=\lf\{x\in\rn:\ M_{\triangledown}(f)(x)>2^i\r\}.$$
Then $\boz_i$ is open and, by \eqref{1207y}, we find that
\begin{align*}
\sup_{i\in\zz}2^i\|\chi_{\boz_i}\|_{\lv}
\le\|M_{\triangledown} (f)\|_{\wlv}\sim\|f\|_{\whv}.
\end{align*}
By the Whitney decomposition (see, for example, \cite[p.\,463]{g09-1}),
we know that, for any $i\in\zz$, there exists a sequence $\{\qij\}_{j\in\nn}$ of cubes
such that
\begin{enumerate}
\item[{\rm (i)}] $\bigcup_{j\in\nn} \qij = \boz_i$ and
$\{\qij\}_{j\in\nn}$ have disjoint interiors;

\item[{\rm (ii)}] for any $j\in\nn$, $\sqrt{n}\,l_{\qij}
\le \dist(\qij,\, \boz_i^{\complement}) \le 4 \sqrt{n}\,l_{\qij }$,
where $l_{\qij }$ denotes the side length of the cube $\qij$ and
$\dist(\qij,\, \boz_i^{\complement}):=\inf\{|x-y|:\ x\in\qij,\ y\in \boz_i^{\complement}\}$;

\item[{\rm (iii)}] for any $j,\, k\in\nn$, if the boundaries of
cubes $\qij$ and $Q_{i,k}$ touch, then
$\frac14\le\frac{l_{\qij}}{ l_{Q_{i,k}}}\le 4;$

\item[{\rm (iv)}] for any given $j\in\nn$, there exist at most
$12 n$ different cubes $Q_{i,k}$ that touch $\qij$.
\end{enumerate}
For any $i\in\zz$, $j\in\nn$ and $x\in\rn$, let
$$\dist\lf(x,\boz_i^\com\r):=\inf\lf\{|x-y|:\ y\in \boz_i^\com\r\},$$
$$\wz\boz_i:=\lf\{(x,t)\in\rr^{n+1}_+:\ 0<2t(|x_0|+1)<\dist(x, \boz_i^\com)\r\},$$
$$\wqij:=\lf\{(x,t)\in\rr_+^{n+1}
:\ x\in\qij,\ (x,t)\in\wz\boz_i\bh\wz\boz_{i+1}\r\},$$
and, for any $\epsilon\in(0,\fz)$,
$$\bije(x):=\int_{\epsilon}^{\fz}\int_\rn\chi_{\wqij}(y,t) F(y,t)
\wz \psi _t(x-y)\,dy\frac{dt}{t}$$
and
\begin{equation*}
b_i^{\ez}(x):=\int_\ez^\fz\int_\rn\chi_{\Omega_i^\ast}(y,t)F(y,t)
\wz \psi _t(x-y)\,dy\frac{dt}t,
\end{equation*}
where $$\Omega_i^\ast:=\{(y,t)\in\rr_+^{n+1}:\
y\in \Omega_i,\ (y,t)\in\wz\boz_i\bh\wz\boz_{i+1}\}.$$ It is easy to see that, for any $x\in\rn$,
$\{\dist(x,\, \boz_i^{\complement})\}_{i\in\zz}$
is decreasing in $i$, since $\{\Omega_i\}_{i\in\zz}$ is decreasing in $i$.

Next we finish the proof of this proposition by fours steps.

\textbf{Step 1)} In this step, we show that, for any $\ez\in(0,\fz)$ and $i\in\zz$,
\begin{equation}\label{1214b}
\|b_i^\ez\|_{L^\fz(\rn)}\ls 2^i
\end{equation}
with the implicit positive constant independent of $\ez$, $i$ and $f$.

To this end, we first observe that, for any given $(x,t)\in \rr_+^{n+1}$,
$$\supp({\wz\psi_t(x-\cdot)})\subset B(x,t[|x_0|+1]).$$
For any $i\in\zz$, let
$\Omega_i^{\ast,1}:=\{(y,t)\in\rr_+^{n+1}:\ y\in \Omega_i,\ (y,t)\in\wz\boz_i\}$
and
$$\Omega_i^{\ast,2}:=\lf\{(y,t)
\in\rr_+^{n+1}:\ y\in \Omega_i,\ (y,t)\in(\wz\boz_{i+1})^\complement\r\}.$$
Then $\chi_{\Omega_i^\ast}=\chi_{\Omega_i^{\ast,1}}\chi_{\Omega_i^{\ast,2}}$.

If $t\ge \frac{\dist(x,\Omega_i^\complement)}{|x_0|+1}$, then, for any $(y,t)\in \rr_+^{n+1}$
with $y\in B(x,t[|x_0|+1])$,
\begin{align*}
\dist\lf(y,\Omega_i^\complement\r)\le
\dist\lf(x,\Omega_i^\complement\r)+|x-y|
< \dist\lf(x,\Omega_i^\complement\r)+t(|x_0|+1)\le 2t(|x_0|+1),
\end{align*}
which implies that $(y,t)\notin \wz\Omega_i$ and hence
$\chi_{\Omega_i^{\ast,1}}(y,t)\wz \psi _t(x-y)=0$.

If $t< \frac{\dist(x,\Omega_i^\complement)}{3(|x_0|+1)}$, then, for any $(y,t)\in \rr_+^{n+1}$
with $y\in B(x,t[|x_0|+1])$,
\begin{align}\label{1230a}
\dist\lf(y,\Omega_i^\complement\r)
&\ge
\dist\lf(x,\Omega_i^\complement\r)-|x-y|> \dist\lf(x,\Omega_i^\complement\r)-t(|x_0|+1)\\
&>2t(|x_0|+1),\noz
\end{align}
which implies that $(y,t)\in \wz\Omega_i$, $y\in\Omega_i$ and hence
$\chi_{\Omega_i^{\ast,1}}(y,t)\wz \psi _t(x-y)=\wz \psi _t(x-y)$.

If $t\ge \frac{\dist(x,\Omega_{i+1}^\complement)}{|x_0|+1}$, then, for any $(y,t)\in \rr_+^{n+1}$
with $y\in B(x,t[|x_0|+1])\cap \Omega_i$,
\begin{align}\label{1230b}
\dist\lf(y,\Omega_{i+1}^\complement\r)
&\le
\dist\lf(x,\Omega_{i+1}^\complement\r)+|x-y|
<\dist\lf(x,\Omega_{i+1}^\complement\r)+t(|x_0|+1)\\
&\le 2t(|x_0|+1),\noz
\end{align}
which implies that $(y,t)\notin \wz\Omega_{i+1}$ and hence
$\chi_{\Omega_{i+1}^{\ast,2}}(y,t)\wz \psi _t(x-y)=\wz \psi _t(x-y)$.

If $t< \frac{\dist(x,\Omega_{i+1}^\complement)}{3(|x_0|+1)}$, then, for any $(y,t)\in \rr_+^{n+1}$
with $y\in B(x,t[|x_0|+1])$,
\begin{align*}
\dist\lf(y,\Omega_{i+1}^\complement\r)\ge
\dist\lf(x,\Omega_{i+1}^\complement\r)-|x-y|> \dist\lf(x,\Omega_{i+1}^\complement\r)-t(|x_0|+1)
>2t(|x_0|+1),
\end{align*}
which implies that $(y,t)\in \wz\Omega_{i+1}$ and hence
$\chi_{\Omega_{i+1}^{\ast,1}}(y,t)\wz \psi _t(x-y)=0$.

From the above arguments, we deduce that, for any given $(x,t)\in\rr_+^{n+1}$, if
$$t\notin \lf[\frac{\dist(x,\Omega_{i+1}^\complement)}{3(|x_0|+1)},
\frac{\dist(x,\Omega_{i}^\complement)}{|x_0|+1}\r],$$ then, for any $y\in\rn$, it holds true that
\begin{equation}\label{1214a}\chi_{\Omega_{i}^{\ast}}(y,t)\wz \psi _t(x-y)
=\chi_{\Omega_{i}^{\ast,1}}(y,t)\chi_{\Omega_{i}^{\ast,2}}(y,t)\wz \psi _t(x-y)=0.
\end{equation}

Next, for any given $i\in\zz$ and $x\in\rn$, we estimate $|b_i^\ez(x)|$ in two cases.

\emph{Case 1.1)} $\frac{\dist(x,\Omega_i^\complement)}{3(|x_0|+1)}
\le\frac{\dist(x,\Omega_{i+1}^\complement)}{|x_0|+1}$.

In this case, we have
\begin{equation*}
\frac{\dist (x,\Omega_{i+1}^\complement)}{3(|x_0|+1)}
\le \frac{\dist(x,\Omega_i^\com)}{3(|x_0|+1)}\le \frac{\dist(x,\Omega_{i+1}^\com)}{|x_0|+1}
\le \frac{\dist(x,\Omega_i^\com)}{|x_0|+1}.
\end{equation*}
Then, by \eqref{1214a}, we know that
\begin{align}\label{1019-x}
|b_i^{\ez}(x)|&\le\int_{\frac{\dist(x,\Omega_{i+1}^\complement)}{3(|x_0|+1)}
}^{\frac{\dist(x,\Omega_{i+1}^\complement)}{|x_0|+1}}
\int_\rn\chi_{\Omega_i^\ast}(y,t)|F(y,t)|
|\wz \psi _t(x-y)|\,dy\frac{dt}t
+\int_{\frac{\dist(x,\Omega_{i}^\complement)}{3(|x_0|+1)}}
^{\frac{\dist(x,\Omega_{i}^\complement)}{|x_0|+1}}\cdots.
\end{align}
Since $\chi_{\Omega_i^\ast}(y,t)\neq 0$ implies that $(y,t)\notin \wz\Omega_{i+1}$, it follows
that $|F(y,t)|\le 2^{i+1}$, where $F$ is as in \eqref{fandg}.
By this, \eqref{1019-x} and the fact that, for any $t\in(0,\fz)$
and $x\in\rn$,
$\int_\rn |\wz \psi _t(x-y)|\,dy\ls1$, we conclude that
\begin{align}\label{1019-y}
|b_i^\ez(x)|\ls 2^i\lf\{\int_{\frac{\dist(x,\Omega_{i+1}^\complement)}{3(|x_0|+1)}}
^{\frac{\dist(x,\Omega_{i+1}^\complement)}{|x_0|+1}}\,\frac{dt}t
+\int_{\frac{\dist(x,\Omega_{i}^\complement)}{3(|x_0|+1)}}
^{\frac{\dist(x,\Omega_{i}^\complement)}{|x_0|+1}}\,\frac{dt}t\r\}\ls 2^i
\end{align}
with the implicit positive constants independent of $\ez$, $i$ and $x$.

\emph{Case 1.2)} $\frac{\dist(x,\Omega_i^\complement)}{3(|x_0|+1)}
>\frac{\dist(x,\Omega_{i+1}^\complement)}{|x_0|+1}$.

In this case, we have
$$ \frac{\dist(x,\Omega_{i+1}^\com)}{|x_0|+1}<\frac{\dist(x,\Omega_i^\com)}{3(|x_0|+1)}
<\frac{\dist(x,\Omega_i^\com)}{|x_0|+1}.$$
Then, by \eqref{1230a}, \eqref{1230b} and \eqref{1214a}, we obtain
\begin{align*}
\lf|b_i^\ez(x)\r|&\le\int_{\frac{\dist(x,\Omega_{i+1}^\complement)}{3(|x_0|+1)}}
^{\frac{\dist(x,\Omega_{i+1}^\complement)}{|x_0|+1}}
\int_\rn\chi_{\Omega_i^\ast}(y,t)|F(y,t)|
|\wz \psi _t(x-y)|\,dy\frac{dt}t
+\int_{\frac{\dist(x,\Omega_{i}^\complement)}{3(|x_0|+1)}}
^{\frac{\dist(x,\Omega_{i}^\complement)}{|x_0|+1}}\cdots\\
&\hs\hs+\lf|\int_{\frac{\dist(x,\Omega_{i+1}^\complement)}{|x_0|+1}}
^{\frac{\dist(x,\Omega_{i}^\complement)}{3(|x_0|+1)}}
\int_\rn F(y,t)
\wz \psi _t(x-y)\,dy\frac{dt}t\r|\\
&=:{\rm I}_1+{\rm I}_2+{\rm I}_3.
\end{align*}
By the argument same as that used in the proof of \eqref{1019-y}, we have
$${\rm I}_1+{\rm I}_2\ls 2^i.$$

For ${\rm I}_3$, by \eqref{1019-z},
we find that
\begin{equation}\label{1019-u}
{\rm I}_3=\lf|G\lf(x,\frac{\dist(x,\Omega_{i+1}^\complement)}{|x_0|+1}\r)
-G\lf(x,\frac{\dist(x,\Omega_{i}^\complement)}{3(|x_0|+1)}\r)\r|.
\end{equation}
On the other hand, if $t\in[\frac{\dist(x,\Omega_{i+1}^\complement)}{|x_0|+1},
\frac{\dist(x,\Omega_{i}^\complement)}{3(|x_0|+1)}]$, then
$\dist(x,\Omega_{i+1}^\complement)\le t(|x_0|+1)$. Thus, there exists
$x^\ast\in \Omega_{i+1}^\com$ such that $|x-x^\ast|\le 3t(|x_0|+1)$, which, together with
\eqref{1018-x}, further implies that
$$|G(x,t)|\ls M_\triangledown(f)(x^\ast)\ls 2^{i}.$$
By this and \eqref{1019-u}, we find that ${\rm I}_3\le 2^i$. Therefore, in this case,
we also have $|b_i^\ez(x)|\ls 2^i$, which, combined with \eqref{1019-y}, implies that
\eqref{1214b} holds true. This finishes the proof of Step 1).

\textbf{Step 2)} In this step, we construct $g_\az$ for any $\az\in(0,\fz)$.

By \eqref{1214b} of Step 1), we conclude that,
for any $i\in\zz$, $\{b_i^\ez\}_{\ez\in(0,\fz)}$ is bounded in $L^\fz(\rn)$ uniformly with respect
to $\ez\in(0,\fz)$. Thus, by the Alaoglu theorem (see, for example, \cite[Theorem 3.17]{Ru91})
and the well-known diagonal rule, we find that
there exist $\{b_i\}_{i\in\zz}\st L^\fz(\rn)$ and $\{\ez_k\}_{k\in\nn}\st(0,\fz)$ such that
$\ez_k\to0$ as $k\to\fz$ and, for any $i\in\zz$ and $g\in L^1(\rn)$,
\begin{equation}\label{1019-v}
\lim_{k\to\fz}\langle b_i^{\ez_k},g\rangle=\langle b_i, g\rangle;
\end{equation}
moreover, for any $i\in\zz$,
\begin{equation}\label{1209x}
\|b_i\|_{L^\fz(\rn)}\ls 2^i.
\end{equation}

Next, we claim that, for any $i_0\in\zz$,
\begin{equation}\label{1019-w}
\lim_{k\to\fz}\sum_{i=-\fz}^{i_0}b_i^{\ez_k}=\sum_{i=-\fz}^{i_0}b_i\quad{\rm in}\quad \cs'(\rn).
\end{equation}
Indeed, by \eqref{1214b}, we know that, for any $\ez\in(0,\fz)$,
\begin{align}\label{1209y}
\sum_{i=-\fz}^{i_0}\|b_i^\ez\|_{L^\fz(\rn)}\ls \sum_{i=-\fz}^{i_0}2^i\ls 2^{i_0},
\end{align}
and, by \eqref{1209x}, we have
\begin{equation}\label{1209z}
\sum_{i=-\fz}^{i_0}\|b_i\|_{L^\fz(\rn)}\ls \sum_{i=-\fz}^{i_0}2^i\ls 2^{i_0}.
\end{equation}
Thus, both $\sum_{i=-\fz}^{i_0}b_i^\ez$ and $\sum_{i=-\fz}^{i_0}b_i$ converge in $L^\fz(\rn)$
and, for any $g\in \cs(\rn)$,
\begin{align*}
\sum_{i=-\fz}^{i_0}\lf[|\langle b_i^{\ez_k},g\rangle|+|\langle b_i,g\rangle|\r]
&\le \|g\|_{L^1(\rn)}\sum_{i=-\fz}^{i_0}[\|b_i^\ez\|_{L^\fz(\rn)}+\|b_i\|_{L^\fz(\rn)}]\\
&\ls 2^{i_0}\|g\|_{L^1(\rn)}<\fz.
\end{align*}
Therefore, for any $\delta\in(0,\fz)$, there exists $N\in\nn\cap[|i_0|,\fz)$, depending on $\delta$ and $n$, such that
$$\sum_{i=-\fz}^{-N}\lf[|\langle b_i^{\ez_k},g\rangle|+|\langle b_i,g\rangle|\r]<\delta\|g\|_{L^1(\rn)}/4.$$

On the other hand, by \eqref{1019-v}, we find that, for any given $i\in\zz$ and $g\in\cs(\rn)$,
there exists $K_i\in\nn$,  depending on $i$ and $g$, such that,
when $k>K_i$,
$$|\langle b_i^{\ez_k},g\rangle- \langle b_i,g\rangle|<\frac{\delta}{2(|i_0|+N)}.$$
Let $K:=\max\{K_{-N+1}, K_{-N+2},\dots,K_{i_0}\}$. Then,  for any given $i\in\zz$ and $g\in\cs(\rn)$, 
by \eqref{1209y} and \eqref{1209z},
we know that, for any $k>K$,
\begin{align*}
&\lf|\lf\langle \sum_{i=-\fz}^{i_0}b_i^{\ez_k}, g\r\rangle
-\lf\langle  \sum_{i=-\fz}^{i_0}b_i, g\r\rangle\r|\\
&\hs =\lf|\sum_{i=-\fz}^{i_0}\lf\langle b_i^{\ez_k}, g\r\rangle
-\sum_{i=-\fz}^{i_0}\lf\langle  b_i, g\r\rangle\r|\\
&\hs \le \sum_{i=-\fz}^{-N}\lf[|\langle b_i^{\ez_k},g\rangle|+|\langle b_i,g\rangle|\r]
+\sum_{i=-N+1}^{i_0}\lf|\lf\langle b_i^{\ez_k}, g\r\rangle-\lf\langle  b_i, g\r\rangle\r|\\
&\hs \le \delta\|g\|_{L^1(\rn)}/4+(|i_0|+N)\frac{\delta}{2(|i_0|+N)}\ls\delta,
\end{align*}
where the implicit positive constant depends on $i$ and $g$.
This implies that \eqref{1019-w} holds true.

Now, for any given $\az\in(0,\fz)$, we choose $i_0\in\zz$ such that
$2^{i_0}\le\az<2^{i_0+1}$. Let
$$g_\az:=\sum_{i=-\fz}^{i_0} b_i.$$
Then, by the above claim and \eqref{1209z}, we conclude that $g_\az\in L^\fz(\rn)$
and
$$\|g_\az\|_{L^\fz(\rn)}\ls 2^{i_0}\sim \az,$$
which completes the proof of Step 2).

\textbf{Step 3)} In this step, we construct $b_\az$ for any $\az\in(0,\fz)$.

From an argument similar to that used in the proof
of \cite[Theorem 4.4]{yyyz16}, we deduce that there exist
$\{b_{i,j}\}_{i>i_0,j\in\nn}\st L^\fz(\rn)$ and a subsequence
$\{\ez_{k_l}\}_{l\in\nn}\st \{\ez_k\}_{k\in\nn}$ such that
$\ez_{k_l}\to 0$ as $l\to\fz$ and, for any $i>i_0$, $j\in\nn$ and $g\in L^1(\rn)$,
$$\lim_{l\to\fz}\langle b_{i,j}^{\ez_{k_l}},g\rangle=\langle b_{i,j},g\rangle,$$
$\supp b_{i,j}\st c_1Q_{i,j}$ for some constant $c_1\in(1,\fz)$,
$\|b_{i,j}\|_{L^\fz(\rn)}\ls 2^i$
and $\int_\rn b_{i,j}(x)x^\gamma\,dx=0$ for any $\gamma\in\zz_+^n$ with
$|\gamma|\le s_0$, where $s_0$ is as in \eqref{1218a}.
Moreover,
$$\lim_{l\to\fz}\sum_{i=i_0+1}^\fz\sum_{j\in\nn}b_{i,j}^{\ez_{k_l}}
=\sum_{i=i_0+1}^\fz\sum_{j\in\nn}b_{i,j}\quad{\rm in}\quad \cs'(\rn).$$

Let
$$b_\az :=\sum_{i=i_0+1}^\fz\sum_{j\in\nn}b_{i,j}.$$
Next, we show that $b_\az$ satisfies \eqref{1209u}.
By the construction of $b_{i,j}$, we know that there exists $\wz c_1\in(0,\fz)$ such that
\begin{equation}\label{1209-a}
b_\az=\sum_{i=i_0+1}^\fz\sum_{j\in\nn}\lf[\wz c_12^i\|\chi_{c_1Q_{i,j}}\|_{\lv}\r]
\frac{b_{i,j}}{\wz c_12^i\|\chi_{c_1Q_{i,j}}\|_\lv}
=:\sum_{i=i_0+1}^\fz \lz_{i,j}\wz b_{i,j}
\end{equation}
is a linear combination of $(p(\cdot),\fz)$-atoms (see Definition \ref{d1230}(i)).
Then, by Lemma \ref{l1230-a}, the fact that
$\chi_{c_1Q_{i,j}}\ls \cm(\chi_{Q_{i,j}})$ and property (i) of the
previous Whitney decomposition presented in this proof, we find that
\begin{align}\label{1209w}
&\lf\|\lf\{\sum_{i=i_0+1}^\fz\sum_{j\in\nn}
\lf[\frac{\lz_{i,j}\chi_{c_1Q_{i,j}}}{\|\chi_{c_1Q_{i,j}}\|_\lv}\r]^{\underline{p}}
\r\}^{\frac1{\underline{p}}}\r\|_{\lv}\\
&\hs \ls\lf\|\lf\{\sum_{i=i_0+1}^\fz\sum_{j\in\nn}2^{i\underline{p}}
\cm(\chi_{Q_{i,j}})\r\}^{\frac1{\underline{p}}}\r\|_{\lv}
\ls \lf\|\lf\{\sum_{i=i_0+1}^\fz\sum_{j\in\nn}2^{i\underline{p}}
\chi_{Q_{i,j}}\r\}^{\frac1{\underline{p}}}\r\|_{\lv}\noz\\
&\hs\ls \lf\|\lf\{\sum_{i=i_0+1}^\fz2^{i\underline{p}}\chi_{\Omega_i}\r\}^{\frac1{\underline{p}}}
\r\|_{\lv}
\sim \lf\|\lf\{\sum_{i=i_0+1}^\fz[2^{i}\chi_{\Omega_i\backslash \Omega_{i+1}}]^{\underline{p}}
\r\}^{\frac1{\underline{p}}}\r\|_{\lv}\noz\\
&\hs\ls \lf\|M_{\triangledown}(f)\lf\{\sum_{i=i_0+1}^\fz\chi_{\Omega_i\backslash \Omega_{i+1}}
\r\}^{\frac1{\underline{p}}}\r\|_{\lv}
\sim \lf\|M_{\triangledown}(f)\chi_{\Omega_{i_0+1}}\r\|_{\lv}\noz\\
&\hs\ls \lf\|M_{\triangledown}(f)\chi_{\{M_{\triangledown}(f)>\az\}}\r\|_{\lv},\noz
\end{align}
where $\underline{p}$ is as in \eqref{1219a}.
On the other hand, by the definition of $W\!L^{\wz p(\cdot)}(\rn)$
and \eqref{1207y}, we know that, for $\theta\in(0,1)$,
\begin{align}\label{1209v}
&\lf\|M_{\triangledown}(f)\chi_{\{M_{\triangledown}(f)>\az\}}\r\|_{\lv}\\
&\hs\le\lf\{\sum_{i=0}^\fz\lf\|[M_\triangledown(f)]^{\underline{p}}
\chi_{\{2^i\az<M_\triangledown(f)\le 2^{i+1}\az\}}\r\|_{L^{\frac{p(\cdot)}{\underline{p}}}(\rn)}
\r\}^{\frac1{\underline{p}}}\noz\\
&\hs\sim \lf\{\sum_{i=0}^\fz(2^i\az)^{\underline{p}}\|\chi_{\{M_\triangledown(f)>2^i\az\}}
\|_{\lv}^{\underline{p}}
\r\}^{\frac1{\underline{p}}}\noz\\
&\hs\sim \lf\{\sum_{i=0}^\fz(2^i\az)^{-\theta\underline{p}/(1-\theta)}
\lf[2^i\az \|\chi_{\{M_\triangledown(f)>2^i\az\}}
\|_{L^{\wz p(\cdot)}(\rn)}\r]^{\underline{p}/(1-\theta)}
\r\}^{\frac1{\underline{p}}}\noz\\
&\hs\ls \|M_\triangledown(f)\|_{W\!H^{\wz p(\cdot)}(\rn)}^{\frac1{1-\theta}}
\lf\{\sum_{i=0}^\fz(2^i\az)^{-\theta\underline{p}/(1-\theta)}\r\}^{\frac1{\underline{p}}}\noz\\
&\hs\sim \|M_\triangledown(f)\|_{W\!L^{\wz p(\cdot)}(\rn)}^{\frac1{1-\theta}} \az^{-\theta/(1-\theta)}
\sim \|f\|_{W\!H^{\wz p(\cdot)}(\rn)}^{\frac1{1-\theta}}\az^{-\theta/(1-\theta)}<\fz.\noz
\end{align}

Combining \eqref{1209-a}, \eqref{1209w} and \eqref{1209v}, we conclude that
$b_\az$ belongs to the variable atomic Hardy space $H_{\rm atom}^{p(\cdot),\fz}(\rn)$
(see Definition \ref{d1230}(ii)).
By this and Lemma \ref{l1230b}, we find that
\begin{align*}
\|b_\az\|_{\hv}&\sim\|b_\az\|_{H_{\rm atom}^{p(\cdot),\fz}(\rn)}
\ls \lf\|\lf\{\sum_{i=i_0+1}^\fz\sum_{j\in\nn}
\lf[\frac{\lz_{i,j}\chi_{c_1Q_{i,j}}}{\|\chi_{c_1Q_{i,j}}\|_\lv}\r]^{\underline{p}}
\r\}^{\frac1{\underline{p}}}\r\|_{\lv}\\
&\ls\lf\|M_{\triangledown}(f)\chi_{\{M_{\triangledown}(f)>\az\}}\r\|_{\lv}.
\end{align*}
This finishes the proof of Step 3).

\textbf{Step 4)} In this step, we prove that $f=g_\az+b_\az$ in $\cs'(\rn)$.

We first claim that, for any given $i\in\zz$, $\ez\in(0,\fz)$ and $x\in\rn$,
\begin{equation}\label{1129x}
\sum_{j\in\nn}b_{i,j}^\ez(x)=b_i^\ez(x).
\end{equation}
Indeed, since, for any $(y,t)\in\rn\times(0,\fz)$ and all $x\in B(y,3[|x_0|+1]t)$,
$F(y,t)\le M_\triangledown(f)(x)$, it follows that
\begin{align*}
|F(y,t)|\le \|\chi_{B(y,3[|x_0|+1]t)}\|_{\lv}^{-1}
\|M_\triangledown(f)\|_{L^{p(\cdot)}(B(y,3[|x_0|+1]t))},
\end{align*}
which, together with the fact that, for any measurable subset $E\st \rn$,
$$\|\chi_E\|_{\lv}\ge \min\lf\{|E|^{\frac1{p_-}},|E|^{\frac1{p_+}}\r\},$$
implies that
\begin{align}\label{1129y}
|F(y,t)|\le \max\{t^{-\frac{n}{p_-}},t^{-\frac{n}{p_+}}\}
\|M_\triangledown(f)\|_{L^{p(\cdot)}(\rn)}.
\end{align}
Thus, by \eqref{1129y}, $\sum_{j\in\nn}\chi_{\wz Q_{i,j}}=\chi_{\Omega_i^\ast}\le 1$
and the Lebesgue dominated convergence theorem, we conclude that
\begin{align*}
\sum_{j\in\nn}b_{i,j}^\ez(x)
&=\sum_{j\in\nn}\int_{\epsilon}^{\fz}\int_\rn\chi_{\wqij}(y,t) F(y,t)
\wz \psi _t(x-y)\,dy\frac{dt}{t}\\
&=\lim_{N\to\fz}\int_{\epsilon}^{\fz}\int_\rn
\sum_{j=1}^N\chi_{\wqij}(y,t) F(y,t)
\wz \psi _t(x-y)\,dy\frac{dt}{t}\\
&=\int_{\epsilon}^{\fz}\int_\rn\chi_{\Omega_i^\ast}(y,t) F(y,t)
\wz \psi _t(x-y)\,dy\frac{dt}{t}
=b_i^\ez(x).
\end{align*}
Therefore, \eqref{1129x} holds true.

Now, by the argument same as that used in \cite[p.\,2855]{yyyz16},
 \eqref{1019-w} and \eqref{1129x}, we find that
\begin{align*}
f&=\lim_{l\to\fz}\sum_{i\in\zz}\sum_{j\in\nn}b_{i,j}^{\ez_{k_l}}
=\lim_{l\to\fz}\sum_{i=-\fz}^{i_0}\sum_{j\in\nn}b_{i,j}^{\ez_{k_l}}+
\lim_{l\to\fz}\sum_{i=i_0+1}^{\fz}\sum_{j\in\nn}b_{i,j}^{\ez_{k_l}}\\
&=\lim_{l\to\fz}\sum_{i=-\fz}^{i_0}b_{i}^{\ez_{k_l}}+
\lim_{l\to\fz}\sum_{i=i_0+1}^{\fz}\sum_{j\in\nn}b_{i,j}^{\ez_{k_l}}
=\sum_{i=-\fz}^{i_0}b_{i}+
\sum_{i=i_0+1}^{\fz}\sum_{j\in\nn}b_{i,j}
=g_\az+b_\az,
\end{align*}
where all above summations converge in $\cs'(\rn)$.
This finishes the proof of Step 4) and hence the proof of Proposition \ref{p-czd}.
\end{proof}

\section{Proofs of Theorem \ref{t-inter} and Corollary \ref{cor}\label{s-2}}
\hskip\parindent
In this section, we prove Theorem \ref{t-inter} and Corollary \ref{cor} by using Proposition
\ref{p-czd}. To this end, we need the following known real interpolation result, which is
\cite[Theorem 4.1]{kv14} of Kempka and Vyb\'iral in the case when $p(\cdot)=q_0(\cdot)$ and $q=\fz$.

\begin{lem}\label{l1230c}
Let $p(\cdot)\in\cp(\rn)$ and $\theta\in(0,1)$.
Then it holds true that
$$(\lv,L^\fz(\rn))_{\theta,\fz}=W\!L^{p(\cdot)/(1-\theta)}(\rn).$$
\end{lem}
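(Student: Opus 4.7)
The plan is to prove the two inclusions in $(\lv,L^\fz(\rn))_{\tz,\fz} = W\!L^{\wz p(\cdot)}(\rn)$ (with $\wz p(\cdot) := p(\cdot)/(1-\tz)$) separately, via explicit decompositions and $K$-functional bookkeeping. The single algebraic fact I would use to exchange the two exponents is that, for every measurable $E \st \rn$, Remark \ref{r-lv}(i) with $s = 1/(1-\tz)$ applied to $\chi_E = |\chi_E|^{1/(1-\tz)}$ gives
$$\|\chi_E\|_\lv = \|\chi_E\|_{L^{\wz p(\cdot)}(\rn)}^{1/(1-\tz)},$$
which plays, in this variable-exponent setting, the role of the classical formula $\|\chi_E\|_{L^p(\rn)} = |E|^{1/p}$.

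\emph{Forward inclusion} $W\!L^{\wz p(\cdot)}(\rn) \st (\lv,L^\fz(\rn))_{\tz,\fz}$. For $f \in W\!L^{\wz p(\cdot)}(\rn)$ and $\az \in (0,\fz)$, set $g_\az := f\chi_{\{|f|\le\az\}}$ and $b_\az := f\chi_{\{|f|>\az\}}$, so that $f = g_\az + b_\az$ and $\|g_\az\|_{L^\fz(\rn)} \le \az$. A dyadic layer-cake decomposition of $b_\az$ along $\{2^i\az < |f| \le 2^{i+1}\az\}$, combined with the $\underline{p}$-triangle inequality from Remark \ref{r-lv}(i), gives
$$\|b_\az\|_\lv \ls \lf\{\sum_{i=0}^\fz (2^i\az)^{\underline{p}} \|\chi_{\{|f|>2^i\az\}}\|_\lv^{\underline{p}}\r\}^{1/\underline{p}},$$
mirroring the manipulation in \eqref{1209v}. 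Inserting the displayed identity together with the weak-type bound $2^i\az \|\chi_{\{|f|>2^i\az\}}\|_{L^{\wz p(\cdot)}(\rn)} \le \|f\|_{W\!L^{\wz p(\cdot)}(\rn)}$ turns the $i$-sum into a convergent geometric series (summable because $\tz > 0$) and yields
$$\|b_\az\|_\lv \ls \az^{-\tz/(1-\tz)} \|f\|_{W\!L^{\wz p(\cdot)}(\rn)}^{1/(1-\tz)}.$$
Thus $K(t,f;\lv,L^\fz(\rn)) \le \|b_\az\|_\lv + t\az$, and choosing $\az \sim t^{-(1-\tz)} \|f\|_{W\!L^{\wz p(\cdot)}(\rn)}$ balances the two summands, giving $t^{-\tz} K(t,f;\lv,L^\fz(\rn)) \ls \|f\|_{W\!L^{\wz p(\cdot)}(\rn)}$ uniformly in $t \in (0,\fz)$.

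\emph{Reverse inclusion} $(\lv,L^\fz(\rn))_{\tz,\fz} \st W\!L^{\wz p(\cdot)}(\rn)$. For $f$ in the interpolation space and any $\az \in (0,\fz)$, pick $t_\az \in (0,\fz)$ and a decomposition $f = f_0 + f_1$ with $\|f_0\|_\lv + t_\az \|f_1\|_{L^\fz(\rn)} \le 2K(t_\az,f;\lv,L^\fz(\rn)) \le 2t_\az^\tz \|f\|_{\tz,\fz}$. Setting $t_\az^{1-\tz} := 4\|f\|_{\tz,\fz}/\az$ forces $\|f_1\|_{L^\fz(\rn)} \le \az/2$, so $\{|f|>\az\} \st \{|f_0|>\az/2\}$. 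The pointwise domination $(\az/2)\chi_{\{|f_0|>\az/2\}} \le |f_0|$, together with the displayed identity applied to $E = \{|f|>\az\}$, then gives
$$(\az/2)\|\chi_{\{|f|>\az\}}\|_{L^{\wz p(\cdot)}(\rn)}^{1/(1-\tz)} \le \|f_0\|_\lv \le 2t_\az^\tz \|f\|_{\tz,\fz},$$
and substituting the chosen $t_\az$ and rearranging yields $\az \|\chi_{\{|f|>\az\}}\|_{L^{\wz p(\cdot)}(\rn)} \ls \|f\|_{\tz,\fz}$. Taking the supremum over $\az > 0$ completes the proof.

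\emph{Expected main obstacle.} The only genuinely non-routine point is the scaling identity relating $\|\chi_E\|_\lv$ and $\|\chi_E\|_{L^{\wz p(\cdot)}(\rn)}$: it replaces the explicit formula $\|\chi_E\|_{L^p(\rn)} = |E|^{1/p}$ which drives the classical constant-exponent proof, and it is the only place the specific linkage $\wz p(\cdot) = p(\cdot)/(1-\tz)$ enters. Once this identity is derived from Remark \ref{r-lv}(i), both inclusions reduce to standard $K$-functional bookkeeping parallel to the proof of \cite[Theorem 7]{rs73}. The absence of a usable decreasing-rearrangement formulation in the variable exponent setting rules out a direct route through an explicit formula for $K(t,f;\lv,L^\fz(\rn))$, which is why the quantitative decomposition approach above appears to be the natural path.
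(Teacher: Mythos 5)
Your proof is correct, but it is worth noting that the paper does not actually prove this lemma at all: it is quoted as a known result, namely \cite[Theorem 4.1]{kv14} of Kempka and Vyb\'iral specialized to $p(\cdot)=q_0(\cdot)$ and $q=\fz$ (their theorem treats general variable Lorentz spaces $L^{p(\cdot),q}(\rn)$). What you supply instead is a self-contained direct argument in the spirit of the classical Rivi\`ere--Sagher proof of $(L^p(\rn),L^\fz(\rn))_{\tz,\fz}=W\!L^{p/(1-\tz)}(\rn)$, with the scaling identity $\|\chi_E\|_{\lv}=\|\chi_E\|_{L^{\wz p(\cdot)}(\rn)}^{1/(1-\tz)}$ (an immediate consequence of Remark \ref{r-lv}(i)) replacing the formula $\|\chi_E\|_{L^p(\rn)}=|E|^{1/p}$; this is exactly the right substitute, and your forward-direction layer-cake estimate is the same manipulation the paper itself performs in \eqref{1209v}. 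Both directions check out: the truncation $f=f\chi_{\{|f|\le\az\}}+f\chi_{\{|f|>\az\}}$ with $\az\sim t^{-(1-\tz)}\|f\|_{W\!L^{\wz p(\cdot)}(\rn)}$ gives the embedding of the weak space into the interpolation space, and the near-optimal decomposition at $t_\az$ with $t_\az^{1-\tz}=4\|f\|_{\tz,\fz}/\az$ gives the converse. Two minor points you should make explicit if this were written out in full: passing from the two-term inequality $\|f+g\|_{\lv}^{\underline{p}}\le\|f\|_{\lv}^{\underline{p}}+\|g\|_{\lv}^{\underline{p}}$ to a countable sum uses the Fatou property of the Luxemburg quasi-norm (the paper uses this freely, so it is not a gap), and in the reverse inclusion one should remark that elements of $\lv+L^\fz(\rn)$ are genuine measurable functions, so that the level sets $\{|f|>\az\}$ are well defined. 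A practical advantage of your argument is that it needs only $p(\cdot)\in\cp(\rn)$, matching the hypotheses of the lemma exactly, at the cost of losing the generality of the cited result.
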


Using Proposition \ref{p-czd} and Lemma \ref{l1230c}, we now show Theorem \ref{t-inter} as follows.

\begin{proof}[Proof of Theorem \ref{t-inter}]
We first prove that
\begin{equation}\label{1218b}
W\!H^{\wz p(\cdot)}(\rn)\st (H^{p(\cdot)}(\rn),L^\fz(\rn))_{\theta,\fz}.
\end{equation}
Let $f\in W\!H^{\wz p(\cdot)}(\rn)$. Then we show
$f\in (H^{p(\cdot)}(\rn),L^\fz(\rn))_{\theta,\fz}$ by two steps.

\textbf{Step 1)} In this step, we estimate $K(t,f;H^{p(\cdot)}(\rn),L^\fz(\rn))$ for any $t\in(0,\fz)$.

By Proposition \ref{p-czd},
we know that, for any $\az\in(0,\fz)$,
there exist $g_\az\in L^\fz(\rn)$ and $b_\az\in H^{p(\cdot)}(\rn)$
such that $f= g_\az+b_\az$ in $\cs'(\rn)$, $\|g_\az\|_{L^\fz(\rn)}\ls \az$ and
$b_\az$ satisfies \eqref{1209u}. Then, by Proposition \ref{p-czd}, we find that, for any $t\in(0,\fz)$,
\begin{align*}
&K(t,f;H^{p(\cdot)}(\rn),L^\fz(\rn))\\
&\hs=\inf\lf\{\|f_0\|_{\hv}+t\|f_1\|_{L^\fz(\rn)}:\
f=f_0+f_1,\ f_0\in\hv\ {\rm and}\ f_1\in L^\fz(\rn)\r\}\\
&\hs\le\inf_{\az\in(0,\fz)}\lf\{\|b_\az\|_{\hv}+t\|g_\az\|_{L^\fz(\rn)}:
\ b_\az\ {\rm and}\ g_\az\ {\rm are
\ as\ in\  Proposition\ \ref{p-czd}}\r\}\\
&\hs\ls\inf_{\az\in(0,\fz)}\lf\{\lf\|M_\triangledown (f)\chi_{\{M_\triangledown (f)>\az\}}
\r\|_{\lv}+t\az\r\}\\
&\hs\sim \inf_{\az\in(0,\fz)}\lf\{\lf\|M_\triangledown (f)\sum_{j=0}^\fz
\chi_{\{2^j\az<M_\triangledown (f)\le 2^{j+1}\az\}}\r\|_{\lv}+t\az\r\},
\end{align*}
which, together with Remark \ref{r-lv}(i), implies that
\begin{align}\label{1219c}
&K(t,f;H^{p(\cdot)}(\rn),L^\fz(\rn))\\
&\hs\ls \inf_{\az\in(0,\fz)}\lf\{\lf[\sum_{j=0}^\fz\lf\|M_\triangledown (f)
\chi_{\{2^j\az<M_\triangledown (f)\le 2^{j+1}\az\}}\r\|_{\lv}^{\underline{p}}
\r]^{\frac1{\underline{p}}}+t\az\r\}\noz\\
&\hs\ls \inf_{\az\in(0,\fz)}\lf\{\lf[\sum_{j=0}^\fz(2^j\az)^{\underline{p}}\lf\|
\chi_{\{M_\triangledown (f)>2^j\az\}}\r\|_{\lv}^{\underline{p}}
\r]^{\frac1{\underline{p}}}+t\az\r\}\noz\\
&\hs\ls \inf_{\az\in(0,\fz)}\lf\{\lf[\sum_{j=0}^\fz[2^j\az h(2^j\az)]^{\underline{p}}
\r]^{\frac1{\underline{p}}}+t\az\r\},\noz
\end{align}
where, for any $\lz\in(0,\fz)$,
$$h(\lz):=\|\chi_{\{M_{\triangledown}(f)>\lz\}}\|_{\lv}.$$

For any $t\in(0,\fz)$, let
$$\az:=\az(t):=\inf\lf\{\mu\in(0,\fz):\ \lf[\sum_{j=0}^\fz\lf\{2^j h(2^j\mu)\r\}^{\underline{p}}
\r]^{\frac1{\underline{p}}}\le t\r\}.$$
Observe that the function $h$ is decreasing on $(0,\fz)$.
Then it is easy to see that
$$\lf[\sum_{j=0}^\fz\lf\{2^j h(2^j\az(t))\r\}^{\underline{p}}
\r]^{\frac1{\underline{p}}}\le t.$$
From this and \eqref{1219c}, we deduce that
\begin{align}\label{1206x}
K(t,f;H^{p(\cdot)}(\rn),L^\fz(\rn))\ls t\az(t).
\end{align}

\textbf{Step 2)} In this step, we estimate
$${\rm I}:=\sup_{t\in(0,\fz)}t^{-\theta}
K(t,f;H^{p(\cdot)}(\rn),L^\fz(\rn)).$$

By \eqref{1206x}, we find that
\begin{align}\label{1219d}
{\rm I}\ls\sup_{t\in(0,\fz)}t^{1-\theta}\az(t)
\sim\sup_{k\in\zz}\sup_{\gfz{t\in(0,\fz)}{2^k<\az(t)\le 2^{k+1}}}
t^{1-\theta}2^k.
\end{align}
Notice that, when $2^k<\az(t)$,
$$\lf\{\sum_{j=0}^\fz[2^jh(2^j2^k)]^{\underline{p}}\r\}^{\frac1{\underline{p}}}\ge t,$$
which, combined with \eqref{1219d}, implies that
\begin{align*}
{\rm I}\ls \sup_{k\in\zz}2^k\lf\{\sum_{j=0}^\fz[2^jh(2^j2^k)]^{\underline{p}}
\r\}^{\frac{1-\theta}{\underline{p}}}.
\end{align*}

If $\frac{1-\theta}{\underline{p}}\le 1$ with $\underline{p}$ as in \eqref{1219a},
then, by the well-known inequality that,
for any $d\in(0,1]$ and $\{a_i\}_{i\in\nn}\st\cc$,
$$\lf(\sum_{i\in\nn}|a_i|\r)^d\le \sum_{i\in\nn}|a_i|^d,$$
we know that
\begin{align}\label{1206y}
{\rm I}&\ls \sup_{k\in\zz}2^k\sum_{j=0}^\fz[2^jh(2^{j+k})]^{1-\theta}
\ls \sup_{k\in\zz}\sum_{j=0}^\fz 2^{-\theta j}2^{j+k}[h(2^{j+k})]^{1-\theta}\\
&\ls\sum_{j=0}^\fz2^{-\theta j}\sup_{l\in\zz}2^l[h(2^l)]^{1-\theta}
\ls\sup_{l\in\zz}2^l[h(2^l)]^{1-\theta}
\sim\sup_{l\in\zz}2^l\|\chi_{\{M_\triangledown (f)>2^l\}}\|_{\lv}^{1-\theta}\noz\\
&\sim \sup_{l\in\zz}2^l\|\chi_{\{M_\triangledown (f)>2^l\}}\|_{L^{\wz p(\cdot)}(\rn)}
\ls \|M_\triangledown(f)\|_{W\!L^{\wz p(\cdot)}(\rn)}.\noz
\end{align}

If $\frac{1-\theta}{\underline{p}}> 1$ with $\underline{p}$ as in \eqref{1219a},
then, by the H\"older inequality, we find that,
for any $\ez\in(0,\frac{\theta}{1-\theta})$,
\begin{align}\label{1206z}
{\rm I}&\ls\sup_{k\in\zz}2^k\sum_{j=0}^\fz 2^{j(1+\ez)(1-\theta)}[h(2^{j+k})]^{1-\theta}
\ls \sup_{l\in\zz}2^l[h(2^l)]^{1-\theta}\\
&\ls \sup_{l\in\zz}2^l\|\chi_{\{M_\triangledown (f)>2^l\}}\|_{L^{\wz p(\cdot)}(\rn)}
\ls \|M_\triangledown(f)\|_{W\!L^{\wz p(\cdot)}(\rn)}.\noz
\end{align}

Now, by \eqref{1206y} and \eqref{1206z}, we conclude that
\begin{align*}
\sup_{t\in(0,\fz)}t^{-\theta}
K(t,f;H^{p(\cdot)}(\rn),L^\fz(\rn))
\ls \|M_\triangledown(f)\|_{W\!L^{\wz p(\cdot)}(\rn)}
\sim \|f\|_{W\!H^{\wz p(\cdot)}(\rn)},
\end{align*}
which completes the proof of Step 2).
Therefore, $f\in (H^{p(\cdot)}(\rn),L^\fz(\rn))_{\theta,\fz}$ and hence
\eqref{1218b} holds true.

Conversely, we need to show that
\begin{equation}\label{1219e}
(H^{p(\cdot)}(\rn),L^\fz(\rn))_{\theta,\fz}\st W\!H^{\wz p(\cdot)}(\rn).
\end{equation}
To prove \eqref{1219e}, let $T$ be a sublinear operator defined by setting,
for any $f\in\cs'(\rn)$,
$T(f):=f_{N,+}^\ast$,
where $N$ is as in Definition \ref{d-hardy} and $f_{N,+}^\ast$ is as in \eqref{2.8x}.

We claim that the operator $T$ is bounded from the space $(\hv,L^\fz(\rn))_{\theta,\fz}$ to
the space $(\lv,L^\fz(\rn))_{\theta,\fz}$.
Indeed, let $g\in (\hv,L^\fz(\rn))_{\theta,\fz}$. Then, by the definition
of $(\hv,L^\fz(\rn))_{\theta,\fz}$, we know that there exist
$g_0\in \hv$ and $g_1\in L^\fz(\rn)$ such that
\begin{equation}\label{1207x}
\sup_{t\in(0,\fz)}t^{-\theta}[\|g_0\|_{\hv}+t\|g_1\|_{L^\fz(\rn)}]
\ls \|g\|_{(\hv,L^\fz(\rn))_{\theta,\fz}}.
\end{equation}
Moreover, observe that $T(g)\le T(g_0)+T(g_1)$.
Notice that $T$ is bounded from $L^\fz(\rn)$ to $L^\fz(\rn)$ and also from $\hv$ to $\lv$.
It follows that $T(g_0)\in \lv$ and $T(g_1)\in L^\fz(\rn)$.
Let
$$E_0:=\lf\{x\in\rn:\ \frac 12 T(g)(x)\le T(g_0)(x)\r\}\ {\rm and}\
E_1:=\lf\{x\in\rn:\ \frac 12 T(g)(x)\le T(g_1)(x)\r\}.$$
Then $\rn =[E_0\cup E_1]=[E_0\cup(E_1\backslash E_0)]$. Thus, we have
$$T(g)=T(g)\chi_{E_0}+T(g)\chi_{E_1\backslash E_0}\in \lv+L^\fz(\rn).$$
From this and \eqref{1207x}, we deduce that
\begin{align*}
\|T(g)\|_{(\lv,L^\fz(\rn))_{\theta,\fz}}
&\le\sup_{t\in(0,\fz)}t^{-\theta}\lf[\|T(g)\chi_{E_0}\|_{\lv}+
t\|T(g)\chi_{E_1\backslash E_0}\|_{L^\fz(\rn)}\r]\\
&\ls \sup_{t\in(0,\fz)}t^{-\theta}\lf[\|T(g_0)\|_{\lv}+
t\|T(g_1)\|_{L^\fz(\rn)}\r]\\
&\ls \sup_{t\in(0,\fz)}t^{-\theta}\lf[\|g_0\|_{\hv}+
t\|g_1\|_{L^\fz(\rn)}\r]\\
&\ls \|g\|_{(\hv,L^\fz(\rn))_{\theta,\fz}}.
\end{align*}
Therefore, the above claim holds true.

By this claim and Lemma \ref{l1230c}, we conclude that,
if $f\in (\hv,L^\fz(\rn))_{\theta,\fz}$, then
$T(f)$ belongs to $W\!L^{\wz p(\cdot)}(\rn)$, namely,
$f\in W\!H^{\wz p(\cdot)}(\rn)$. Thus, \eqref{1219e} holds true.
This finishes the proof of Theorem \ref{t-inter}.
\end{proof}

We end this section by giving the proof of Corollary \ref{cor} via using Theorem
\ref{t-inter} and Lemma \ref{l1230c}.

\begin{proof}[Proof of Corollary \ref{cor}]
Since $p_-\in(1,\fz)$, it follows from \cite[Lemma 3.1]{ns12} that
\begin{equation}\label{1212b}
\hv=\lv
\end{equation}
with equivalent norms. Moreover, there exists $\theta\in(0,1)$ such that
$(1-\theta)p_-\in(1,\fz)$. By this, \eqref{1212b},
Theorem \ref{t-inter} and the fact that
$$(L^{(1-\theta)p(\cdot)}(\rn), L^\fz(\rn))_{\theta,\fz}=W\!L^{p(\cdot)}(\rn)$$
(see Lemma \ref{l1230c}), we conclude that
\begin{align*}
\whv&=(H^{(1-\theta)p(\cdot)}(\rn),L^\fz(\rn))_{\theta,\fz}\\
&=(L^{(1-\theta)p(\cdot)}(\rn),L^\fz(\rn))_{\theta,\fz}
=W\!L^{p(\cdot)}(\rn).
\end{align*}
This finishes the proof of Corollary \ref{cor}.
\end{proof}

\medskip

\noindent Ciqiang Zhuo

\medskip

\noindent Key Laboratory of High Performance Computing and Stochastic
Information Processing (HPCSIP) (Ministry of Education of China), College of Mathematics
and Computer Science, Hunan Normal University, Changsha, Hunan 410081, P. R. China

\smallskip

\noindent {\it E-mail}: \texttt{cqzhuo@mail.bnu.edu.cn} (C. Zhuo)

\bigskip

\noindent Dachun Yang (Corresponding author) and Wen Yuan

\medskip

\noindent  School of Mathematical Sciences, Beijing Normal University,
Laboratory of Mathematics and Complex Systems, Ministry of
Education, Beijing 100875, People's Republic of China

\smallskip

\noindent {\it E-mails}: \texttt{dcyang@bnu.edu.cn} (D. Yang)

\hspace{0.988cm} \texttt{wenyuan@bnu.edu.cn} (W. Yuan)

\end{document}